\def\classification#1{\def\@class{#1}}
\DeclareMathAlphabet{\mathscr}{OT1}{rsfs}{n}{it}
\newcommand{\R}{{\mathbb R}}
\newcommand{\C}{\mathbb{C}}
\newcommand{\F}{\mathbb{F}}
\newtheorem{theorem}{Theorem}
\newtheorem{lemma}[theorem]{Lemma}
\newtheorem{corollary}[theorem]{Corollary}
\theoremstyle{remark}
\newtheorem{remark}[theorem]{Remark}
\title{On new sum-product type estimates}
\author{Sergei V. Konyagin}
\address{ Sergei V. Konyagin, Steklov Mathematical Institute, 8 Gubkin Street, Moscow 119991, Russia}
\email{konyagin@mi.ras.ru}
\author{Misha Rudnev}
\address{Misha Rudnev, Department of Mathematics, University of Bristol, Bristol BS8 1TW, United Kingdom}
\email{m.rudnev@bristol.ac.uk}
\subjclass[2000]{68R05, 11B75}
\begin{document}
\begin{abstract} New lower bounds involving sum, difference, product, and ratio
sets of a set $A\subset \C$ are given. The estimates involving the sum set
match, up to constants, the state-of-the-art estimates, proven by Solymosi for
the reals, and are obtained by
generalising his approach to the complex plane. The bounds involving the
difference set improve the currently best known ones, also due to Solymosi, in both the real and complex cases
by means of combining
the Szemer\'edi-Trotter theorem with an arithmetic combinatorics technique.
\end{abstract}

\maketitle

\section{Introduction} Erd\H os and Szemer\'edi, \cite{ES},  conjectured  that if $A$ is a finite set of integers, then for any $\varepsilon>0$, as the cardinality $|A|\rightarrow\infty$,
$$
|A+A|+|A\cdot A|\geq |A|^{2-\varepsilon}.
$$
Above,
$$
A+A = \{a_1+a_2:\,a_1,a_2\in A\}
$$
is called the sum set of $A$, the product $A\cdot A$, difference $A-A$, and ratio $A:A$ sets being similarly defined.
(In the latter case one should not divide by zero.)

Variations of the Erd\H os-Szemer\'edi conjecture address subsets of other
rings or fields -- see \cite{TT} for a general discussion and \cite{BJ} for a
new quantitative sum-product estimate in function fields -- as well as
replacing, e.g., the sum set with the difference set $A-A.$ The conjecture is far from being
settled, and therefore current ``world records" vary
with such variations of the problem.

The best result for $A\subset \R$, for instance, is due to Solymosi (\cite{So1}), claiming
\begin{equation}\label{wr}
|A+A|+|A\cdot A|\gg \frac{|A|^{1+\frac{1}{3}}}{\log^{\frac{1}{3}} |A|},
\end{equation}
and without the logarithmic term if $A\cdot A$ is replaced by $A:A$.  The notation
$\ll,\;\gg$ is being used throughout to suppress absolute
constants in inequalities, that is constants which do not depend on the
parameter $|A|$.

At the first glance, the construction in \cite{So1} appears to be specific for
reals, nor does  it seem to allow for replacing the sum set $A+A$ with the difference set $A-A$.
So, if $A\subset \C$ or if $A+A$ for reals gets replaced by $A-A$, the best known
result comes from an older paper of Solymosi \cite{So}, claiming
\begin{equation}\label{wr1}
|A- A|+|A\cdot A|\gg \frac{|A|^{1+\frac{3}{11}}}{\log^{\frac{3}{11}} |A|},
\end{equation}
and without the logarithmic term if $A\cdot A$ gets replaced by $A:A$.

In this paper we show, firstly, that the order-based observation which allowed Solymosi to prove (\ref{wr}), namely the fact that for real positive $a,b,c,d$
 $$\left(\frac{a}{b}< \frac{c}{d}\right)\qquad \Rightarrow \qquad\left( \frac{a}{b}<\frac{a+c}{b+d}<\frac{c}{d}\right)$$
 admits a natural extension to the complex case. We therefore extend
the estimate (\ref{wr}) to the case $A\subset \C$. This is the content of the forthcoming Theorem \ref{sumest}.

Secondly, we prove new
estimates involving the difference set, for $A\subset \C$, which improve on
(\ref{wr1}). For this we use rather
different arguments, relying on the Szemer\'edi-Trotter theorem,
combined with an arithmetic technique. This is the content of the forthcoming Theorem \ref{diffest}.

We remark that both Theorem \ref{sumest} and Theorem \ref{diffest}, even though they apply to the case $A\subset \C$, rely crucially on the metric properties of the Euclidean space, and we presently do not see how the ideas behind them could apply to the
case when $A$ is a small subset of a prime residue field $\mathbb Z_p$ of large characteristic, where the best known exponent in the sum-product inequality is $\frac{12}{11}$, up to a logarithmic factor in $|A|$, see \cite{MR}.

\medskip
We now formulate our main results.

\begin{theorem}\label{sumest}
For any finite $A\subset \C$, with at least two elements, one has
the following estimates:
\begin{equation}\begin{aligned}
|A+A| + |A:A| &\gg & |A|^{1+\frac{1}{3}}, \\
|A+A| + |A\cdot A| &\gg & \frac{|A|^{1+\frac{1}{3}}}{\log^{\frac{1}{3}} |A|}. \\
\end{aligned}
\label{ressum}\end{equation}
\end{theorem}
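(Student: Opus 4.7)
\textit{Proof plan.}
The plan is to adapt Solymosi's proof of (\ref{wr}) to the complex setting, the new ingredient being a complex analog of the convexity identity $(a/b<c/d)\Rightarrow(a/b<(a+c)/(b+d)<c/d)$. Write $(a+c)/(b+d)=\lambda(a/b)+(1-\lambda)(c/d)$ with $\lambda=b/(b+d)\in\C$; when $b,d$ both lie in an angular sector of width $\eta$, one has $\arg\lambda\in[-\eta,\eta]$ and $|\lambda|\le 1$, so $\lambda$ is confined to a thin neighborhood of the real interval $[0,1]\subset\C$. Hence $(a+c)/(b+d)$ lies close to the complex segment joining $a/b$ and $c/d$; and when in addition $a/b$ and $c/d$ share a half-plane, the argument of $(a+c)/(b+d)$ is squeezed between $\arg(a/b)$ and $\arg(c/d)$ up to $O(\eta)$.

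\textit{Wedge-disjointness.}
Using pigeonhole on $O(1)$ angular sectors, I would pass to a subset $A^*\subset A$ with $|A^*|\gg|A|$ contained in a sector of small width $\eta$, and restrict the ratio set to a fixed half-plane of arguments (again losing only absolute constants). For each ratio $r$ in the restricted collection set $k_r=|\{(a,b)\in A\times A^*:a=rb\}|$; then $\sum_r k_r\gg|A|^2$ and $k_r\le|A|$. Sort the ratios by argument as $r_1,\dots,r_m$ with $m\le|A:A|$. For two consecutive ratios the Minkowski sum of the corresponding level-sets has exactly $k_{r_i}k_{r_{i+1}}$ elements (by injectivity, since $r_i\ne r_{i+1}$) and lies in $(A+A)^2$. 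By the complex convexity observation, the ``new ratio'' at each such sum point lies in an $O(\eta)$-neighborhood of the argument interval $(\arg r_i,\arg r_{i+1})$; these intervals cover the chosen half-plane with only $O(1)$ overlap, so each element of $(A+A)^2$ is counted in at most $O(1)$ such Minkowski sums. This yields
\[
\sum_{i=1}^{m-1}k_{r_i}k_{r_{i+1}}\ll|A+A|^2.
\]

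\textit{Conclusion and main obstacle.}
A Cauchy--Schwarz / dyadic pigeonhole argument of the kind used by Solymosi in the real case then converts the above, combined with $\sum_r k_r\gg|A|^2$ and $m\le|A:A|$, into $|A+A|^2\cdot|A:A|\gg|A|^4$, proving the first estimate of (\ref{ressum}). For the $|A\cdot A|$ version, one works instead with the multiplicative energy $E^\times(A)=\sum_r k_r^2$: Solymosi's dyadic grouping on top of the wedge bound yields $E^\times(A)\ll|A+A|^2\log|A|$, and combining with the Cauchy--Schwarz bound $E^\times(A)\ge|A|^4/|A\cdot A|$ gives $|A+A|^2\cdot|A\cdot A|\gg|A|^4/\log|A|$, which produces the $\log^{1/3}|A|$ loss. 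The chief difficulty lies in the wedge step: in $\R^2$ consecutive slope wedges are disjoint intervals of $\R$, whereas in $\C^2$ they are merely $O(\eta)$-neighborhoods of complex segments, so choosing the sector widths and the half-plane restriction precisely enough to keep each sum point in only $O(1)$ wedges---while losing just absolute constants in the pigeonhole---is the main technical step to be carried out.
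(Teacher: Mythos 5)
Your complex convexity observation is exactly the right starting point: writing $\frac{a+c}{b+d}=\lambda\frac{a}{b}+(1-\lambda)\frac{c}{d}$ with $\lambda=b/(b+d)$ confined to a thin neighbourhood of $[0,1]$ when $b,d$ lie in a narrow sector is the same identity the paper uses (its meniscus $M_\epsilon$). The gap is in the wedge-disjointness step, and it is not merely a technicality to be carried out: the plan of sorting the ratios by argument and arguing that the argument intervals $(\arg r_i,\arg r_{i+1})$ overlap only $O(1)$ times cannot work, because the ratio set $A:A$ is a genuinely two-dimensional subset of $\C$ and the argument alone does not separate its elements. Many ratios can share the same (or nearly the same) argument while having wildly different moduli; in the extreme case $A\subset\R_{>0}$ every ratio has argument $0$, all your argument intervals degenerate, and the claim that each element of $(A+A)^2$ is counted in at most $O(1)$ Minkowski sums fails completely (every sum point's new ratio has argument $O(\eta)$, so it lies in the $O(\eta)$-neighbourhood of \emph{all} the intervals). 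Even away from this degenerate case, two ratios that are consecutive in argument can be far apart in the plane, so the neighbourhood of the segment joining them can contain many other ratios and overlap many other such neighbourhoods; knowing only that the new ratio's argument sits near the argument interval gives no injectivity. Note also that the claimed $O(\eta)$ control of the argument is itself shaky: the new ratio deviates from the segment $[r_i,r_{i+1}]$ by up to $\sim\eta|r_{i+1}-r_i|$, which translates into a large angular error when the segment is long compared with its distance to the origin.

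What the paper does instead is replace the one-dimensional ordering by a two-dimensional combinatorial structure: it builds a \emph{minimum spanning tree} $T$ on the (popular subset of the) ratio set viewed as points of $\R^2$, and proves that the open sets $M_{(l_1,l_2)}$ (menisci, enclosed in thin rhombi $R_{(l_1,l_2)}$ of eccentricity $\epsilon$ around the edges) are pairwise disjoint as $(l_1,l_2)$ runs over the edges of $T$. The disjointness uses genuinely planar properties of minimum spanning trees: edges do not cross, adjacent edges meet at angle at least $\pi/3$, and for non-adjacent edges a pair of geometric lemmas (no endpoint of one edge lies in the disc with the other edge as diameter; intersecting rhombi force the edges to be nearly parallel) rules out overlap. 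The tree has $|L|-1$ edges, which replaces your sum over consecutive pairs, and the rest of the argument (popular lines for the ratio case, dyadic multiplicative-energy classes for the product case, giving the $\log^{1/3}$ loss) matches your concluding step. To repair your proof you would need to abandon the argument-sorting and introduce some such planar separating structure; as written, the key injectivity claim is unsupported and false in natural configurations.
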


\begin{theorem}\label{diffest}
For any finite $A\subset \C$, with at least two elements, one has the following estimates:
\begin{equation}\begin{aligned}
|A-A| + |A:A| &\gg & \frac{|A|^{1+\frac{9}{31}}}{\log^{\frac{4}{31}} |A|}, \\
|A-A| + |A\cdot A| &\gg &\frac{|A|^{1+\frac{11}{39}}}{\log^{\frac{5}{13}} |A|}.\\
\end{aligned}
\label{resdif}\end{equation}
\end{theorem}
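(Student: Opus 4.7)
\medskip
\noindent\textbf{Proof plan for Theorem \ref{diffest}.}
The plan is to bound the multiplicative energy of $A$ from above using the Szemer\'edi--Trotter theorem in the complex plane (available by the work of T\'oth and Zahl), together with multiplicative Pl\"unnecke--Ruzsa inequalities, and then to pass from an energy bound to a bound on $|A\cdot A|$ (respectively $|A:A|$) via Cauchy--Schwarz. The starting baseline is the standard Elekes configuration: take points $P=(A-A)\times(A\cdot A)\subset\C^2$ and the $|A|^2$ lines $\ell_{a,b}:y=a(x+b)$, $a,b\in A$; each line contains the $|A|$ incidences $(c-b,ac)$, $c\in A$, and the complex Szemer\'edi--Trotter theorem immediately gives $|A-A|\cdot|A\cdot A|\gg |A|^{5/2}$. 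This alone only yields exponent $\tfrac14$, well below the target, so an enhancement is needed.

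\smallskip
First I would enrich the Elekes line family by allowing slopes and/or intercepts to range over iterated product and difference sets, e.g.\ lines of the form $y=a_1a_2(x+b)$ or $y=a(x+b_1-b_2)$. The sizes of the relevant iterated sets are controlled by multiplicative Pl\"unnecke--Ruzsa: if $|A\cdot A|\le K|A|$ then $|A^n|\ll K^{n-1}|A|$ and $|A:A|\ll K^2|A|$. Szemer\'edi--Trotter applied to these enriched configurations produces asymmetric inequalities of the shape $|A-A|^{p}\cdot|A\cdot A|^{q}\gg|A|^{r}$ with $r/(p+q)$ already strictly larger than $\tfrac54$.

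\smallskip
The decisive step is to bound the multiplicative energy
\[E_\times(A)=\bigl|\{(a,b,c,d)\in A^4:\,ab=cd\}\bigr|.\]
I would dyadically decompose the ratio representation function $r_{A:A}(\lambda)=|\{(a,b)\in A^2:a=\lambda b\}|$; for each dyadic scale $\tau$, the lines through the origin of slope $\lambda$ with $r_{A:A}(\lambda)\sim\tau$ contribute $\sim\tau$ incidences with points of $A\times A$ and, after a translation trick $(a,b)\mapsto(a-a_0,b-b_0)$, incidences with points of $(A-A)\times(A-A)$. Szemer\'edi--Trotter in $\C^2$ then limits the number of such heavy lines, and summing over the $O(\log|A|)$ dyadic scales yields an inequality of the form $E_\times(A)\ll|A-A|^{\alpha}|A|^{\beta}(\log|A|)^{\gamma}$. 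Combining this with the Cauchy--Schwarz lower bound $|A\cdot A|\ge|A|^4/E_\times(A)$, using multiplicative Pl\"unnecke--Ruzsa to trade $|A\cdot A|$ for $|A:A|$ and vice versa, and optimising the resulting exponents, gives the two bounds in \eqref{resdif}, with the logarithmic losses exactly at the powers $\tfrac{4}{31}$ and $\tfrac{5}{13}$ coming from the dyadic pigeonholing.

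\smallskip
\noindent\textit{Main obstacle.} Step three is the hard point. Solymosi's 2009 sum-set argument (generalised to $\C$ in Theorem \ref{sumest}) exploits the convex ordering of slopes through the origin to produce disjoint point sets inside $(A+A)\times(A+A)$, obtaining the clean bound $E_\times(A)\ll|A+A|^2\log|A|$ and hence exponent $\tfrac13$. Two obstructions block a direct analogue here: in $\C$ the slopes admit no linear order, and the natural difference-analogue of a midpoint lies back on the same line through the origin, destroying the disjointness. One must therefore run the energy count purely through Szemer\'edi--Trotter on carefully chosen point--line configurations, paying with the weaker exponents $\tfrac{9}{31}$ and $\tfrac{11}{39}$ rather than $\tfrac13$.
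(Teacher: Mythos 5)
You have assembled the right toolkit in outline (the complex Szemer\'edi--Trotter theorem of T\'oth and Zahl, the Elekes configuration, dyadic pigeonholing on the ratio representation function, Cauchy--Schwarz from energy to product set), but there is a genuine gap: the step you yourself label the ``decisive step'' and the ``main obstacle'' is exactly the step left open, and the workaround you sketch for it does not work. A bound of the shape $E_*(A)\ll|A-A|^{\alpha}|A|^{\beta}\log^{\gamma}|A|$ obtained by running Szemer\'edi--Trotter directly on lines through the origin against $A\times A$ (or against translates of $(A-A)\times(A-A)$) is essentially the mechanism behind the older exponent $\frac{3}{11}$ of \cite{So}, which is precisely what the theorem improves; it does not reach $\frac{9}{31}$ or $\frac{11}{39}$, and the logarithmic powers $\frac{4}{31}$, $\frac{5}{13}$ cannot be read off from ``dyadic pigeonholing'' without an actual computation. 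The multiplicative Pl\"unnecke--Ruzsa step you invoke plays no role in the argument and would only introduce losses.

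The missing idea is a change of object: one never bounds the multiplicative energy of $A$. Instead one takes the planar point set $P\subseteq A\times A$ of points lying on popular lines through the origin (popular ratios $L$, multiplicity about $N$ per line), regards $P$ as an \emph{additive} set in the group $\C^2$, and applies the Schoen--Shkredov/Katz--Koester third-moment inequality (Corollary \ref{arco}) to it, namely $E_3(P)\,E(P,P-P)\gg|P|^8/|P-P|$. The geometric input is that $P+Q$, for $Q=-P$ or $Q=P-P$, is covered by at most $|L||Q|$ translated lines, so a weighted Szemer\'edi--Trotter argument (Lemma \ref{wst}) gives the inverse-cube tail bound $|\{x\in P+Q:\,n(x)\ge t\}|\ll|L|^{3/2}|Q|^{5/2}t^{-3}$, whence the upper bounds $E_3(P)\ll|L|^{3/2}|P|^{5/2}\log|A|$ and $E(P,P-P)\ll|P|^{1/2}|P-P|^{7/4}|L|^{3/4}$. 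Feeding these into the third-moment inequality yields $|P-P|^{11/4}|L|^{9/4}\gg|P|^5/\log|A|$, and the two exponents in \eqref{resdif} follow by eliminating $|L|$: via $|L|\le|A:A|$ and $|P-P|\le|A-A|^2$ in the ratio case, and via the Elekes-type multiplicity bound $N\ll|A-A|^2|A\cdot A|/|A|^3$ (Lemma \ref{ll}) in the product case. Your diagnosis of why Solymosi's order-based argument fails for differences is accurate, but without the $E_3$ inequality applied to the two-dimensional set $P$ the plan cannot be completed.
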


\section{Preliminary set-up} \label{prelim}
In this section we develop the preliminary set-up and notation to be used in the forthcoming proofs of Theorems \ref{sumest} and \ref{diffest}.

Since we do not pursue best possible values of the constants, hidden in the
inequalities (\ref{ressum}, \ref{resdif}), we further assume that $0\not\in A$
and $|A|\geq C$, for some absolute constant $C$, which is as large as necessary.

Observe that Theorems \ref{sumest} and \ref{diffest} each claims two different
estimates: one involving the ratio set $A:A$ and the other involving the
product set $A\cdot A$. In order to prove these estimates, we deal with a
certain ``popular'' subset $P$ of the point set $A\times A\subset\C^2$.
Note that if $l\in A: A$ is a ratio, it can be identified with a straight line,
passing through the origin in $\C^2$ and supporting $n(l)$ points of the point
set $A\times A$, where $n(l)$ is the number of realisations of the ratio
$l=\frac{y}{x}:\,x,y\in A.$ As we often refer to ``lines''
throughout the paper, we use the symbol $l$ to denote individual members of the ratio set.

 Even though the proofs of Theorems \ref{sumest} and \ref{diffest} are essentially different, the popular subset $P\subseteq A\times A$ is defined in the same way as to both  theorems. Yet $P$ denotes different point sets apropos of the ratio and product
set cases, which figure within each theorem and are described next. The same concerns the notations $L,N$ pertaining to the point set $P$. In particular, the notation $L$ refers to the set of the corresponding popular ratios, or lines through the origin.

\medskip
{\sf Ratio set case.} In order to establish the estimates involving the ratio set,
the notation $L$ will stand for the set of lines through the origin in $\C^2$,
supporting at least $\frac{1}{2}|A|^2|A:A|^{-1}$ points of $A\times A$ each.
The subset $P$ of $A\times A$ supported on these ``popular'' lines is then such that $|P|\geq  \frac{1}{2}|A|^2$. (Indeed, the lines outside $L$ support at most $\frac{1}{2}|A|^2|A:A|^{-1}\cdot |A:A| =\frac{1}{2}|A|^2$ points.)

The notation $N$ will be used for
the maximum number of points per line in $L$. Trivially, $N\leq |A|,$
and one has $|A|/2\leq |L|\leq |A:A|.$

\medskip
{\sf Product set case.} In order to establish the estimates involving
the product set, the same notations $P,L, N$ will be used for
slightly differently defined, multiplicative energy based quantities.

The multiplicative energy $E_*(A)$ of $A$ is defined as follows:
$$
E_*(A) = |\{ (a_1,\ldots a_4)\in A\times\ldots\times A:\; a_1/a_2 = a_3/a_4\}|.
$$
Since the equation defining $E_*(A)$ can be rearranged as $a_1a_4=a_2a_3$,  by the Cauchy-Schwarz inequality one has
\begin{equation}
E_*(A)\geq \frac{|A|^4}{|A\cdot A|}.
\label{emult}\end{equation}
Geometrically,  $E_*(A)$ is the number of ordered pairs of points of
$A\times A\subset \C^2$, supported on straight lines through the origin, whose slopes $l$
are members of the ratio set $A:A$. A line is identified by its slope $l$ (which is well defined, since $0\not\in A$) and supports some number $n(l)$ points of $A\times A$.

By the pigeonhole principle, there exists some $N\in [1,\ldots,|A|],$  such that if $L$ denotes the set of all lines with $\frac{N}{2} < n(l)\leq N$, then
\begin{equation}
|L|N^2\gg \frac{E_*(A)}{\log|A|} \geq \frac{|A|^4}{|A\cdot A|\log|A|}. \label{min}\end{equation}
(Indeed, it suffices to consider only dyadic values of $N=1,2,\ldots,2^j,\ldots$, with $j=O(\log |A|)$, since trivially $n(l)\leq |A|$.)

Now, in the product set case, let $P$ be a ``popular multiplicative energy''
subset of $A\times A$, containing all points of $A\times A$, supported on the
lines in the above defined set $L$, satisfying (\ref{min}). The quantity $N$
gives the maximum, as well as the approximate number of points of $P$ per line
$l\in L$, that is $|P|\approx |L|N$. (This approximate equality means that
$|P|\ll|L|N$ and $|L|N\ll|P|$.)

\section{Proof of Theorem \ref{sumest}}
Without loss of generality, as we are not pursuing optimal constants in the estimates, we may assume that the set
$A\subset \C\setminus\{0\}$ is located in a reasonably small angular sector,
of angular half-width $|\tan (2\arg z)|<\epsilon$ around the real axis,
with the vertex at $0$, so in particular $0\not\in A+A$. The constant $\epsilon>0$ does not go to zero: it only needs to be small enough for the geometric argument in the end
of the proof of the forthcoming claim to be valid. One can amply set $\epsilon = \frac{1}{100}.$

Theorem \ref{sumest} will follow from the following claim.

\medskip
{\sf Claim.} {\em Let $l_1,l_2$ be two distinct members of the ratio set
$(A:A)\subset \C \cong \R^2$, with some realisations $l_1=\frac{y_1}{x_1}$ and
$l_2= \frac{y_2}{x_2}$, for $x_1,y_1,x_2,y_2\in A$. Consider $l_1,l_2$ as
points in $\R^2$. Then the point $z=\frac{y_1+y_2}{x_1+x_2}$ lies in $\C \cong \R^2$
in some open set $M_{(l_1,l_2)}$, containing  the open straight line interval
$(l_1,l_2)=\{t l_1 +(1-t)l_2,\,t\in(0,1)\}$ and symmetric with respect to this
line interval. Furthermore, consider the ratio set as a vertex set of a tree $T$
in $\R^2$, and let the sum of the Euclidean lengths of the edges of $T$ be
minimum, i.e., let $T$ be a minimum spanning tree on the vertex set $A:A$. Then, if $(l_1,l_2)$ runs over the edges of $T$ (we further write simply $(l_1,l_2)\in T$) the sets $M_{(l_1,l_2)}$ are pairwise disjoint.}

\medskip

The above claim represents a bona fide generalisation of the construction
of Solymosi \cite{So1}, for the positive reals. Here is how the claim applies to the positive real case. The set $A:A$ lies on the positive real axis. The edges of its minimal
spanning tree are consecutive open line intervals
between the vertices, and the sets $M_{(l_1,l_2)}$ are these intervals themselves.

In the forthcoming proof of the claim we will describe the open sets $M_{(l_1,l_2)}$ precisely. Through the rest of this section we assume the claim and show how it results in Theorem \ref{sumest}, by essentially repeating the argument in \cite{So1}.

Indeed, suppose that there are respectively $n(l_1)$ and $n(l_2)$ distinct
representations of some two fixed ratios $l_1,l_2\in A:A$, that is
$l_i= \frac{y_i^{j_i}}{x_i^{j_i}},$ $x_i^{j_i}\in A, y_i^{j_i}\in A$ for $i=1,2$ and $j_i=1,\ldots, n(l_i)$.
From basic linear algebra, the vector sums
 $(x_1^{j_1}+x_2^{j_2},\, y_1^{j_1}+y_2^{j_2})\in \C^2$ attain $n(l_1) n(l_2)$
distinct values for distinct $(j_1,j_2)$. Assuming the claim, on the other hand,
tells one that for all $(j_1,j_2)$, the ratio
$\frac{y_1^{j_1}+y_2^{j_2}}{x_1^{j_1}+x_2^{j_2}}\in \C\cong \R^2$ lies in the set $M_{(l_1,l_2)}.$

Now the fact that the open sets $M_{(l_1,l_2)}$ are pairwise disjoint
 implies that the map
 \begin{equation}\label{map}\begin{array}{c}
 (x_1,y_1)\times (x_2,y_2)\; \rightarrow \; (x_1+x_2,y_1+y_2), \\ \hfill \\ \mbox{for } x_1,y_1,x_2,y_2\in A: \;\exists (l_1,l_2)\in T, \mbox{ with } \frac{y_1}{x_1} = l_1, \frac{y_2}{x_2} = l_2,\end{array}
\end{equation}
 is an injection. Indeed, assuming the contrary suggests that there is a pair of distinct edges, $(l_1,l_2)$ and $(l_1',l_2')$ of the tree $T$, such that
 $(x_1+x_2,y_1+y_2)=(x_1'+x_2',y_1'+y_2')$, where $l_1=\frac{y_1}{x_1},l_2=\frac{y_2}{x_2}, l_1'=\frac{y_1'}{x_1'},l_2'=\frac{y_2'}{x_2'}$. Then, clearly,
 $\frac{y_1+y_2}{x_1+x_2}= \frac{y_1'+y_2'}{x_1'+x_2'}$, which contradicts the claim that $\frac{y_1+y_2}{x_1+x_2}$ and $\frac{y_1'+y_2'}{x_1'+x_2'}$ lie, respectively, in the open sets $M_{(l_1,l_2)}$ and $M_{(l'_1,l'_2)}$, which are pairwise disjoint.

The injectivity of the map (\ref{map}) accounts for the following inequality:

\begin{equation}\label{spe0}
|A+A|^2 \geq \sum_{(l_1,l_2)\in T} n(l_1) n(l_2)
\ge \frac{1}{2}\sum_{(l_1,l_2)\in T}(n(l_1)+n(l_2))\min(n(l_1),n(l_2)).
\end{equation}
The inequality (\ref{spe0}) clearly remains true if one restricts the vertex set of $T$ to any subset of $A:A$, with more than one element, in which case $T$ will be a minimum spanning tree built on these vertices.

It is at this point when one has to distinguish between the ratio and product
set cases by considering as vertices of $T$ only the ratios from the ``popular''
set $L$, defined relative to the ratio or product set case in Section
\ref{prelim}. Given the set of vertices $L$, let $T$ be a minimum spanning tree
built on the vertex set $L$ in $\R^2$. Thus $T$ has $|L|$ vertices and $|L|-1$ edges.

In the ratio set case, one has  $\frac{|A|^2}{2|A:A|}\leq n(l)\leq |A|,\;\forall l\in L$, and thus, from (\ref{spe0}):
\begin{equation}\label{spe1}
|A+A|^2 \geq \frac{|A|^2}{4|A:A|} \sum_{(l_1,l_2)\in T} (n(l_1)+n(l_2))
\geq \frac{|A|^2}{4|A:A|}\sum_{l\in L} n(l) \gg \frac{|A|^4}{|A:A|}.
\end{equation}

In the product set case, where $\frac{N}{2}\leq n(l)\leq N\leq |A|,\;\forall l\in L$,  the claim implies, by (\ref{spe0}) and (\ref{min}), that
\begin{equation}\label{spe2}
|A+A|^2 \geq (|L|-1)\frac{N^2}{4}\gg \frac{E_*(A)}{\log |A|} \geq \frac{|A|^4}{|A\cdot A|\log|A|},
\end{equation}
thus proving the second inequality in (\ref{ressum}). (In view of (\ref{min}) one can assume that $|L|>1$, for otherwise $A\cdot A$ is large enough to ensure (\ref{ressum}) immediately.)

This completes the proof of Theorem \ref{sumest}, conditional on the claim.  \qed

\subsection{Proof of the claim}

Suppose that $x_1,x_2,y_1,y_2\in A$, $\frac{y_1}{x_1}=l_1$, $\frac{y_2}{x_2}=l_2$. Then, with $u=x_2/x_1$, we have

\begin{equation}\label{obs}\frac{y_1+y_2}{x_1+x_2}= \frac{y_1+y_2}{x_1(1+u)} = \frac{l_1}{1+u} + {l_2}\frac{u}{1+u}=l_1+(l_2-l_1)\frac{u}{1+u}.\end{equation}
Since we have assumed that $\tan|2\arg{x_1}|,\tan|2\arg{x_2}| < \epsilon,$
clearly $u$ lies in the open angular wedge $W_\epsilon = \{z:\tan|\arg{z}|<\epsilon\}$ and therefore
$\frac{u}{1+u}$ lies in the image of $W_\epsilon,$
further denoted as $M_\epsilon,$ under the M\"obius map $z'=\frac{z}{1+z}$.

A straightforward calculation shows that $M_\epsilon$ is an open meniscus around
the real line interval $(0,1)$.
The meniscus is formed by the intersection of two open discs
centred respectively at $z_{\pm}=(\frac{1}{2}, \pm \frac{\iota}{2\epsilon})$,
with equal radii $|z_{\pm}|$. It is clearly symmetric around its major axis,
that is the real line interval $(0,1)$. The boundary of each disc
intersects the major axis at the angle, whose tangent equals $\epsilon$, the half-width of $W_\epsilon$.
Clearly, $M_\epsilon$ is amply contained in the open rhombus, whose major diagonal connects
the zero with $1$,  and the minor diagonal has length $\epsilon$.

The meniscus $M_\epsilon$ defines an open set $M_{(l_1,l_2)}$ mentioned in the claim as a composition of a dilation and a translation of $M_\epsilon$: by (\ref{obs}),
$$
\frac{y_1+y_2}{x_1+x_2} \in M_{(l_1,l_2)} = \{ l_1+(l_2-l_1) M_\epsilon\}.
$$
Thus, the set $M_{(l_1,l_2)}$ is contained
in the open rhombus, whose main diagonal is denoted as $e=(l_1,l_2)$, and the minor diagonal has length $\epsilon|l_2-l_1|$. This rhombus will be further denoted as $R_e= R_{(l_1,l_2)}$.

Through the rest of this section, let $L$ be any non-empty subset with more than one element of the ratio set $A:A$. Let $T$ be a minimum spanning tree built on the vertex set $L$. That is $T$ has the minimum net Euclidean length of the edges over all the
 trees with the vertex set $L$.  The tree $T$ has $|L|-1$ edges, which are open straight line segments connecting some pairs of distinct vertices in the set $L$. There are no loops in $T$, and for any pair of distinct vertices
$l_1,l_2\in L$, there is a unique path connecting them.

Through the rest of this section, let us use the uppercase Latin letters $A,B,C,D,\ldots$ for the vertices of $T$, regardless of the rest of the paper, where $A,B,\ldots$ are sets.

First, note the well-known fact that $T$ may not contain intersecting edges. Indeed, suppose
that $(AB)$ and $(CD)$ are edges of $T$ and $(AB)\cap(CD)\neq\emptyset$.
In the tree $T$, there is a unique path from $B$ to $C$ and a unique path from $B$ to $D$. Since $T$ has no loops, one of these two paths, without loss of generality the one from $B$ to $D$, must contain the edge $(CD)$ (for if $(CD)$ is not contained in
either of the two paths, there is a path from $C$ to $D$ other than $(CD)$, via $B$).
Then the path from either $A$ or $B$ to $D$ contains both edges $(AB)$ and $(CD)$. Without loss of generality, let it be the path connecting $A$ and $D$.

Thus, if $(AB)\cap(CD)\neq\emptyset$, these edges can be deleted and replaced by the edges $(AC)$ and $(BD)$,
without violating connectivity or creating loops. On the other hand, $[AC]$ and $[BD]$ are a pair of opposite sides of the convex quadrilateral $ACBD$, while $[AB]$ and $[CD]$ are its diagonals. But the sum of the lengths of either pair of opposite sides
of a convex quadrilateral is smaller than the sum of the lengths of the diagonals. This contradicts the minimality of $T$.

In a minimum spanning tree the angle between adjacent edges is at least
$\frac{\pi}{3}$. To see the latter fact, suppose
that there are two edges $(AB)$ and $(AC)$, with the angle between them at $A$
smaller than $\frac{\pi}{3}$. Then one of the two remaining angles in the
triangle $ABC$ exceeds $\frac{\pi}{3}$ and the edge opposite to it in $T$ can
be deleted and  replaced by the shorter edge $(BC)$,
without violating connectivity or creating loops. This contradicts the
minimality of $T$.

Therefore, the rhombi around adjacent edges cannot intersect, because the tangent of the
half-angle of $R_{(AB)}$ at $A$ or $B$ is just $\epsilon.$
The supposition that the rhombi around a pair of adjacent edges $(AB)$ and $(AC)$
intersect would contradict the fact that the angle between them at $A$ is smaller than $\frac{\pi}{3}$.

Finally, suppose that there is a pair of non-adjacent and non-intersecting
edges $(AB)$ and $(CD)$, such that $R_{(AB)}\cap R_{(CD)}\neq \emptyset.$ Let us show that this also leads to a contradiction if $\epsilon$ is small enough.
The key observation is the following lemma.
\begin{lemma}\label{easylemma}
The vertices $C,D$ cannot lie in the open disk with the diameter $(AB)$.
\end{lemma}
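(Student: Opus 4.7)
The plan is a standard edge-swap argument exploiting the minimality of the spanning tree $T$. First, I would delete the edge $(AB)$ from $T$; since $T$ is a tree, this splits it into two subtrees, whose vertex sets I shall call $V_A$ (containing $A$) and $V_B$ (containing $B$).

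The preparatory observation is that $C$ and $D$ must lie on the same side of this cut. Indeed, if one had $C\in V_A$ and $D\in V_B$, then the edge $(CD)$ would still connect $V_A$ to $V_B$ after $(AB)$ has been removed, so $T\setminus\{(AB)\}$ would remain connected, contradicting the fact that every edge of a tree is a bridge. Hence, relabelling if necessary, I may assume $C,D\in V_A$. A useful by-product is that neither $(BC)$ nor $(BD)$ is already an edge of $T$, since $B\in V_B$ while $C,D\in V_A$.

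Next, I would suppose toward a contradiction that $C$ lies in the open disk with diameter $(AB)$. By Thales' theorem this means $\angle ACB>\pi/2$, and in particular $|BC|<|AB|$. Now consider the graph $T'=(T\setminus\{(AB)\})\cup\{(BC)\}$. Since $B\in V_B$ and $C\in V_A$, and $B\neq C$ (the edges $(AB)$ and $(CD)$ being non-adjacent, $\{A,B\}\cap\{C,D\}=\emptyset$), adjoining $(BC)$ reconnects the two components; as $T'$ has $|L|-1$ edges and is connected, it is a spanning tree on $L$. Its total edge length is strictly smaller than that of $T$, contradicting the minimality of $T$. The symmetric argument, swapping $(AB)$ for $(BD)$ and using $|BD|<|AB|$, rules out $D$ lying in the open disk.

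The only delicate point is the preparatory observation that $C$ and $D$ sit on the same side of the cut; without it the candidate replacement edges would not necessarily reconnect the tree, and the swap argument would collapse. The rest is merely Thales' characterisation of the open disk of diameter $(AB)$ as the locus where $\angle APB>\pi/2$, together with the triviality that an obtuse angle opposite the side $AB$ in a triangle forces the two other sides to be shorter than $AB$.
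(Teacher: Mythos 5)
Your proof is correct and follows essentially the same edge-swap argument as the paper: Thales gives the obtuse angle at $C$, hence a replacement edge shorter than $(AB)$, and this contradicts the minimality of $T$. The only (immaterial) difference is that you determine the component of $C$ after deleting $(AB)$ by using that $(CD)$ is itself an edge of $T$, and then always swap in $(BC)$, whereas the paper case-splits and inserts $(AC)$ or $(BC)$ according to whether the tree path from $A$ to $C$ passes through $(AB)$; both handle the reconnection correctly.
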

\begin{proof}
Indeed, suppose that, say $C$ lies inside the open disk with the diameter $(AB)$. Then the angle $ACB$ is obtuse. Hence, the edge $(AB)$ can be deleted and replaced in the tree $T$ by one of the shorter line segments $(AC)$ or $(BC)$,  without  violating
connectivity or creating loops.
More precisely, if the unique
path from $A$ to $C$ in $T$ incorporates $(AB)$, then $(AB)$ should be replaced
by $(AC)$, and  otherwise by $(BC)$. This contradicts the minimality of $T$. \end{proof}

Let us use Lemma \ref{easylemma} together with the fact that
$(AB)\cap(CD)=\emptyset$ for the proof of the following lemma.

\begin{lemma}\label{lesseasylemma}
If $R_{(AB)}\cap R_{(CD)}\neq \emptyset$ and $\alpha$ is the angle
between $(AB)$ and $(CD)$ then $\tan\alpha \leq \frac{2\epsilon}{1-\epsilon^2}$.
\end{lemma}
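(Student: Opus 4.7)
The proof rests on an angular reformulation of the rhombus condition: a point $P$ belongs to $R_{(AB)}$ if and only if its orthogonal projection $P_1$ onto $\ell_{AB}$ lies in $[A,B]$ and $|PP_1|\leq\epsilon\min(|AP_1|,|BP_1|)$; equivalently, both rays $\vec{AP}$ and $\vec{BP}$ make angles of at most $\arctan\epsilon$ with $\ell_{AB}$. The analogous statement holds for $R_{(CD)}$. Pick $P \in R_{(AB)}\cap R_{(CD)}$, let $P_1, P_2$ denote its projections onto the two lines, and let $Q$ denote the intersection $\ell_{AB}\cap\ell_{CD}$ (well-defined since we may assume $\alpha>0$).

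I would first treat the principal case in which $Q$ lies outside both open segments $(AB)$ and $(CD)$. Let $A'$ be the endpoint of $[A,B]$ closer to $Q$, so on the line $\ell_{AB}$ the points are ordered $Q, A', P_1, A''$, giving $|QP_1|\geq|A'P_1|$. The rhombus bound then yields
\[
\tan\beta_1 \;=\; \frac{|PP_1|}{|QP_1|}\;\leq\;\frac{\epsilon\,|A'P_1|}{|QP_1|}\;\leq\;\epsilon,
\]
where $\beta_1=\angle(\vec{QA'},\vec{QP})$; analogously, $\tan\beta_2\leq\epsilon$ for $\beta_2=\angle(\vec{QC'},\vec{QP})$, with $C'$ the endpoint of $[C,D]$ closer to $Q$. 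The triangle inequality for three concurrent rays at $Q$ gives $\omega\leq\beta_1+\beta_2\leq 2\arctan\epsilon$, where $\omega=\angle(\vec{QA'},\vec{QC'})\in\{\alpha,\pi-\alpha\}$. Since $\omega=\pi-\alpha$ would force $\alpha\geq\pi-2\arctan\epsilon$, contradicting $\alpha\in[0,\pi/2]$ for small $\epsilon$, we must have $\omega=\alpha\leq 2\arctan\epsilon$, and the tangent double-angle identity yields $\tan\alpha\leq\tan(2\arctan\epsilon)=\frac{2\epsilon}{1-\epsilon^2}$.

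The remaining case is when $Q$ lies in one of the open segments, say $Q\in(AB)$ (so $Q\notin(CD)$ by disjointness). Here the bound $|QP_1|\geq|A'P_1|$ fails and Lemma \ref{easylemma} becomes indispensable. Writing $u=|QA|$, $v=|QB|$, $q=|QC|$, the condition that $C, D$ lie outside the open disk with diameter $(AB)$, whose closure contains $Q$ on its diameter, translates into $q^2-q(v-u)\cos\alpha\geq uv$. On the rhombus side, $P\in R_{(CD)}$ forces the perpendicular distance of $P$ to $\ell_{AB}$ to satisfy $|P_y|\geq q\sin\alpha$ (under the assumption $\tan\alpha>\epsilon$, the only interesting range, so that the minimum in $R_{(CD)}$ is attained at $P=C$), whereas $P\in R_{(AB)}$ at $P_x=q\cos\alpha$ requires $|P_y|\leq\epsilon\min(q\cos\alpha+u,v-q\cos\alpha)$. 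Combining these inequalities with the lower bound on $q$ from Lemma \ref{easylemma} and splitting according to which side of the midpoint $(v-u)/2$ the value $q\cos\alpha$ lies on, one derives $\sin\alpha\leq\epsilon(1+\cos\alpha)$, equivalently $\tan(\alpha/2)\leq\epsilon$ via the half-angle identity $\tan(\alpha/2)=\sin\alpha/(1+\cos\alpha)$, whence $\tan\alpha\leq\frac{2\epsilon}{1-\epsilon^2}$. The main technical obstacle is this final case: one must verify the bound uniformly across both sub-cases and both parameter regimes $u\leq v$ and $u\geq v$, and symmetrically with $D$ or $B$ in place of $C$ or $A$ when they provide the controlling endpoint.
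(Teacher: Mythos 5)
Your first case (the intersection point $Q$ of the two lines lying outside both open segments) is correct, and it is a genuinely different route from the paper's: the paper never looks at $Q$ at all, but instead takes the two points $E,F$ where $(CD)$, respectively the boundary of $R_{(CD)}$, meets the boundary of $R_{(AB)}$, observes that the chord $[EF]$ cannot cross either main diagonal, hence makes an angle of tangent less than $\epsilon$ with each of $(AB)$ and $(CD)$, and finishes with $\alpha\le\alpha_1+\alpha_2$. That chord argument treats all mutual positions uniformly, with no coordinates and no case split on where the lines cross.

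The genuine gap is in your remaining case $Q\in(AB)$. The displayed chain is not valid: you lower-bound $|P_y|\ge q\sin\alpha$ by the lowest point of $R_{(CD)}$, but then you impose the $R_{(AB)}$ constraint ``at $P_x=q\cos\alpha$'', i.e.\ at the abscissa of $C$, whereas the common point $P$ of the two rhombi need not lie above that abscissa. Worse, if that chain were legitimate it would assert precisely that $C=(q\cos\alpha,q\sin\alpha)$ lies in the closed rhombus $R_{(AB)}$; since the closed rhombus with the vertices $A,B$ removed is contained in the open disk with diameter $(AB)$, your assembled inequalities contradict Lemma \ref{easylemma} instead of yielding $\sin\alpha\le\epsilon(1+\cos\alpha)$. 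The honest consequence of your two rhombus bounds is only $q\sin\alpha\le\epsilon\,\frac{u+v}{2}$, and combined with the disk condition $q^2-q(v-u)\cos\alpha\ge uv$ this does not bound $\alpha$: for instance with $u\gg v$ and $C$ lying over the long half of $[A,B]$, the disk condition only forces $q\gtrsim(u-v)\cos\alpha$, and the two facts give merely $\sin\alpha\cos\alpha\lesssim\epsilon$, which is compatible with $\alpha$ arbitrarily close to $\pi/2$. To close this case you must also control the horizontal location of $R_{(CD)}$ relative to the tent $y\le\epsilon\min(x+u,\,v-x)$ (or simply invoke the paper's chord argument, which disposes of it with no computation); as written, the key claim ``one derives $\sin\alpha\le\epsilon(1+\cos\alpha)$'' is not established.
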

\begin{proof}
First we assume that $(CD)$ intersects the rhombus $R_{(AB)}$. By
Lemma \ref{easylemma}, neither $C$ or $D$ belongs to the closure
of $R_{(AB)}$. Hence, $(CD)$ intersects the boundary of the rhombus $R_{(AB)}$
at two points, say, $E$ and $F$. Next, since $[EF]\subset(CD)$ does not intersect
$(AB)$, we conclude that the angle $\alpha$ between $[EF]$ and $(AB)$
satisfies the inequality $\tan\alpha<\epsilon$ as required. Similarly,
we prove our assertion if $(AB)$ intersects $R_{(CD)}$.

Now we consider the case where $(CD)$ does not intersect $R_{(AB)}$
and $(AB)$ does not intersect $R_{(CD)}$. Then the boundaries of the
rhombi $R_{(AB)}$ and $R_{(CD)}$ have two common points, say, $E$ and $F$.
The segment $[EF]$ does not intersect the edges $(AB)$ and $(CD)$. Therefore,
the angle $\alpha_1$ between $[EF]$ and $(AB)$ and
the angle $\alpha_2$ between $[EF]$ and $(CD)$ satisfy the inequalities
$\tan\alpha_1<\epsilon$ and $\tan\alpha_2<\epsilon$. Let $\alpha$
be the angle between $(AB)$ and $(CD)$. Then we have
$\alpha\le\alpha_1+\alpha_2$, and the assertion of the lemma follows.
\end{proof}

Finally, to refute the assumption $R_{(AB)}\cap R_{(CD)}\neq \emptyset$,
assume, without loss of generality, that $|AB|=1$,  $|AB|\geq |CD|,$
$A=0$ and $B=1$. Let us now use the conclusion that $(AB)$ and $(CD)$
are close to being parallel, along with Lemma \ref{easylemma} for a
a rough estimate as to where the vertices $C,D$ can be located. They may not
lie inside the open disc with the diameter $(AB)$. Since
$R_{(AB)}\cap R_{(CD)}\neq \emptyset$, $|CD|\leq |AB|$ and
$\tan\alpha \leq \frac{2\epsilon}{1-\epsilon^2}$, where $\alpha$ is the angle between $(AB)$ and $(CD)$,
neither $C$, nor $D$ may possess the
imaginary part, whose absolute value is in excess of $4\epsilon$. If $\epsilon$
is small enough, the real part of the leftmost points, where horizontal lines with
$|\Im z|=4\epsilon$ intersect the circle with the diameter $|AB|=1$ is
$O(\epsilon^2)$. Hence, since $|CD|\leq |AB|$, we arrive in an ample conclusion
that one of the endpoints of $(CD)$, say $C$, must lie inside the open square
box $\{\max(|\Re z|,|\Im z|)<4\epsilon\}$ around $A$,
and $D$ inside the same box translated by $1$, so its centre is now $B$.

This, once again, implies contradiction with the minimality of $T$. Indeed,
now if $\epsilon$ is small enough, the edge $(AB)$, whose length is $1$, can
be deleted in $T$ and replaced by a shorter edge $(AC)$ or $(BD)$, without
violating connectivity or creating  loops. As in the proof of Lemma \ref{easylemma},
the replacement will be $(AC)$ if the unique path from $A$ to $C$ in $T$ incorporates $(AB)$, and $(BD)$ otherwise.

We have exhausted all the possibilities for the mutual alignment of a pair
of edges $(AB)$ and $(CD)$. Thus for
two distinct edges $e_1,e_2\in T$, the open
rhombi $R_{e_1}$ and $R_{e_2}$ are disjoint.
This completes the proof of the claim. $\Box$

\section{Proof of Theorem \ref{diffest}}

\subsection{Lemmata}
The main tool to prove Theorem \ref{diffest} is the Szemer\'edi-Trotter incidence theorem. For any set $\mathcal P$ of points and any set of $\mathcal L$ straight lines in a plane let
$$
I(\mathcal P, \mathcal L)=\{(p,l)\in \mathcal P\times \mathcal L:\;p\in l\}
$$
be the set of incidences.

\begin{theorem}[Szemer\'edi and Trotter \cite{ST}] The maximum number of incidences in $\R^2$ is bounded as follows:
\begin{equation}\label{STe}
|I(\mathcal P,\mathcal  L)|\ll (|\mathcal  P||\mathcal  L|)^{\frac{2}{3}} + |\mathcal P| + |\mathcal L|.
\end{equation}
As a result, if $\mathcal P_t$ (or $\mathcal L_t$) denote the sets of points (or lines) incident to at least $t\geq 1$ lines (or points) of $\mathcal L$ (or $\mathcal P$), then
\begin{equation}\label{work}
\begin{aligned}
|\mathcal P_t|&\ll & \frac{|\mathcal L|^2}{t^3} + \frac{|\mathcal L|}{t},\\
|\mathcal L_t|&\ll & \frac{|\mathcal P|^2}{t^3} + \frac{|\mathcal P|}{t}.
\end{aligned}
\end{equation}
\end{theorem}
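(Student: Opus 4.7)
The plan is to establish the main incidence bound by Sz\'ekely's crossing-number argument and then extract the two corollaries by counting incidences against the threshold $t$.

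First, I would dispose of lines of $\mathcal{L}$ supporting at most one point of $\mathcal{P}$: these contribute at most $|\mathcal{L}|$ incidences in total, which can be absorbed into the final $|\mathcal{L}|$ error term. After this reduction, I would build an auxiliary plane drawing of a graph $G$: the vertex set is $\mathcal{P}$, and for each line $l\in\mathcal{L}$ carrying $k(l)\geq 2$ incident points, I insert along $l$ the $k(l)-1$ straight-line segments joining consecutive incident points. Then
\begin{equation*}
|E(G)| = \sum_{l:\,k(l)\geq 2}(k(l)-1) \geq |I(\mathcal{P},\mathcal{L})| - 2|\mathcal{L}|,
\end{equation*}
and since two distinct edges of $G$ can only cross where their host lines meet, the crossing number satisfies $\mathrm{cr}(G)\leq \binom{|\mathcal{L}|}{2}$.

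Next, I would invoke the crossing number inequality: for any simple graph with $|E|\geq 4|V|$, one has $\mathrm{cr}(G)\gg |E|^3/|V|^2$. (The standard proof combines Euler's formula, in the form $\mathrm{cr}(G)\geq |E|-3|V|+6$, with a random vertex subsample kept with probability $p\sim |V|/|E|$, followed by taking expectations.) Either $|E(G)|\ll |\mathcal{P}|$, in which case the incidence bound is immediate, or
\begin{equation*}
\frac{|E(G)|^3}{|\mathcal{P}|^2}\ll |\mathcal{L}|^2,\qquad\text{whence}\qquad |E(G)|\ll (|\mathcal{P}||\mathcal{L}|)^{2/3}.
\end{equation*}
Putting the two alternatives together with the $2|\mathcal{L}|$ lost above yields $|I(\mathcal{P},\mathcal{L})|\ll (|\mathcal{P}||\mathcal{L}|)^{2/3}+|\mathcal{P}|+|\mathcal{L}|$.

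For the corollaries, each point of $\mathcal{P}_t$ contributes at least $t$ incidences with $\mathcal{L}$, so applying the incidence bound with the point set $\mathcal{P}_t$ in place of $\mathcal{P}$ gives
\begin{equation*}
t\,|\mathcal{P}_t|\leq |I(\mathcal{P}_t,\mathcal{L})|\ll (|\mathcal{P}_t||\mathcal{L}|)^{2/3}+|\mathcal{P}_t|+|\mathcal{L}|.
\end{equation*}
Assuming $t\geq 2$ (the case $t=1$ being trivial), the middle term on the right is dominated by the left-hand side; weighing the remaining two summands against $t|\mathcal{P}_t|$ yields $|\mathcal{P}_t|\ll |\mathcal{L}|^2/t^3+|\mathcal{L}|/t$. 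The bound on $|\mathcal{L}_t|$ follows by the same argument with the roles of points and lines interchanged, since the incidence bound is symmetric in $\mathcal{P}$ and $\mathcal{L}$. The main obstacle is the crossing number inequality itself, which isolates the topological input of the argument; once it is in hand, the incidence bound and its corollaries reduce to elementary counting.
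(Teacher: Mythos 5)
Your proposal is correct in substance, but note that the paper itself offers no proof of this statement at all: it is quoted as a known theorem with the citation \cite{ST} (and, where it is later needed over $\C^2$, the paper appeals to T\'oth \cite{T} and Zahl \cite{Z}). What you supply is Sz\'ekely's crossing-number proof rather than the original cell-decomposition argument of Szemer\'edi and Trotter; your graph construction, the count $|E(G)|\geq |I(\mathcal P,\mathcal L)|-2|\mathcal L|$, the bound $\mathrm{cr}(G)\leq\binom{|\mathcal L|}{2}$, and the two-case use of the crossing-number inequality are all sound (the graph is simple because two points determine a unique line), so modulo the crossing-number inequality, which you correctly isolate as the one external input, this is a complete and standard proof of (\ref{STe}). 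Two small points on the passage to (\ref{work}). First, ``the middle term on the right is dominated by the left-hand side'' is literally true only when $t$ exceeds the implicit constant in (\ref{STe}); for $2\leq t\ll 1$ you should instead observe that every point of $\mathcal P_t$ is an intersection point of two lines of $\mathcal L$, so $|\mathcal P_t|\leq\binom{|\mathcal L|}{2}\ll |\mathcal L|^2/t^3$, which closes that range. Second, the case $t=1$ is not ``trivial'': as stated, the bounds in (\ref{work}) actually fail at $t=1$ (a single line carrying many points of $\mathcal P$, or many lines through one point, are counterexamples). This looseness is already present in the paper's formulation ``$t\geq 1$'' and is harmless there, since (\ref{work}) is only ever invoked for large $t$, but you should not claim $t=1$ follows.
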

Let us note that the linear in $|\mathcal P|,|\mathcal L|$ terms in the
estimates (\ref{STe}, \ref{work}) are essentially trivial and usually of no
interest in the sense of being dominated by the non-linear ones, whenever
these estimates are being used. This is also the case in this paper.

The Szemer\'edi-Trotter theorem is also true in full generality in the plane
over $\C$. This was proved by T\'oth \cite{T}. A more  modern proof came out in a recent paper of Zahl \cite{Z}.  In a particular case, where the point set is a Cartesian product, Solymosi (\cite{So}, Lemma 1) observed that the proof of the $\C^2$
version of the Szemer\'edi-Trotter theorem is considerably more
straightforward than dealing with arbitrary points set in $\C^2$. Although
the geometric part of the forthcoming proof closely follows the construction
in \cite{So}, the point sets to which
we apply the theorem are not necessarily Cartesian products, so strictly
speaking we are using here the general version of the Szemer\'edi-Trotter
theorem in $\C^2$ of T\'oth and Zahl. The estimates (\ref{work}) will be
further used in the $\C^2$ setting without additional comments.

One can easily develop a weighted version of the estimates of the Szemer\'edi-Trotter
theorem, quoted next (see Iosevich et al. \cite{IKRT}). Suppose that each line $l\in \mathcal L$ has been assigned a weight $m(l)\geq 1$. The number of weighted incidences
$i_m(\mathcal P,\mathcal L)$ is obtained by summing over the set
$I(\mathcal P,\mathcal L)$, each pair $(p,l)\in I(\mathcal P,\mathcal L)$ being counted $m(l)$ times.
Suppose that the total weight of all lines is $W$ and the maximum weight per line
is $\mu>0$.
\begin{theorem} \label{STw} The maximum number of weighted incidences between a point set $\mathcal P$ and a set of lines $\mathcal L$,  with the total weight $W$ and  maximum weight per line $\mu$ is bounded as follows:
\begin{equation}\label{STew}
i_m(\mathcal P,\mathcal L)\ll \mu^{\frac{1}{3}} (|\mathcal P|W)^{\frac{2}{3}} + \mu |\mathcal P| + W.
\end{equation}
\end{theorem}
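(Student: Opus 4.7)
The plan is to deduce Theorem \ref{STw} from the unweighted Szemer\'edi--Trotter estimate (\ref{STe}) by a dyadic decomposition of $\mathcal{L}$ according to the line weights. Since $m(l)\ge 1$ for every $l\in\mathcal L$, partition $\mathcal L$ into the dyadic classes
$$
\mathcal L_j=\{l\in\mathcal L\,:\,2^{j-1}<m(l)\le 2^j\},\qquad j=0,1,\ldots,J,
$$
with $J=\lceil\log_2\mu\rceil$. The defining inequality forces $|\mathcal L_j|\le 2W/2^j$ and, summing, $\sum_{j=0}^{J} 2^j|\mathcal L_j|\le 2\sum_{l}m(l)=2W$.

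Next, bound the weighted incidences class by class. Since every line in $\mathcal L_j$ contributes at most $2^j$ to the weighted count per incident point, (\ref{STe}) applied inside $\mathcal L_j$ yields
$$
i_m(\mathcal P,\mathcal L)\;\le\;\sum_{j=0}^{J}2^j\,|I(\mathcal P,\mathcal L_j)|\;\ll\;\sum_{j=0}^{J}2^j\Bigl[(|\mathcal P|\,|\mathcal L_j|)^{2/3}+|\mathcal P|+|\mathcal L_j|\Bigr].
$$
Using $|\mathcal L_j|\le 2W/2^j$ in the non-linear term gives $2^j(|\mathcal P|W/2^j)^{2/3}\asymp 2^{j/3}(|\mathcal P|W)^{2/3}$, which is a geometric sum in $j$ with ratio $2^{1/3}>1$ and is therefore dominated by its top scale $\mu^{1/3}(|\mathcal P|W)^{2/3}$. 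The $|\mathcal P|$ term contributes $|\mathcal P|\sum_{j=0}^{J}2^j\ll \mu|\mathcal P|$, and the $|\mathcal L_j|$ term contributes $\sum_{j}2^j|\mathcal L_j|\le 2W$ by the total-weight identity above. Adding these three pieces produces exactly the right-hand side of (\ref{STew}).

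I do not anticipate a real obstacle: the whole argument is a packaging of the unweighted Szemer\'edi--Trotter bound with dyadic pigeonholing. The only two items worth flagging are, first, that $\mathcal P\subset\C^2$ means we must invoke the T\'oth--Zahl version of Szemer\'edi--Trotter rather than the real-plane statement, which the excerpt explicitly allows; and second, that the seemingly threatening logarithmic factor from summing over $O(\log\mu)$ dyadic scales never materialises because the non-linear term summed is geometric (ratio $2^{1/3}$) and the linear-in-$|\mathcal P|$ term sums as $\sum 2^j\asymp \mu$, absorbing the scales into the top one.
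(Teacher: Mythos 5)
Your proof is correct: the dyadic decomposition by weight, the bound $|\mathcal L_j|\le 2W/2^j$, the geometric summation of the $2^{j/3}(|\mathcal P|W)^{2/3}$ terms up to the top scale $\mu^{1/3}$, and the absorption of the linear terms into $\mu|\mathcal P|+W$ all check out. The paper itself gives no proof of Theorem \ref{STw}, deferring to Iosevich--Konyagin--Rudnev--Ten \cite{IKRT}, and your argument is precisely the standard one used there, so it supplies exactly the omitted details (including the correct observation that the T\'oth--Zahl complex version of (\ref{STe}) is what must be invoked when $\mathcal P\subset\C^2$).
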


\medskip
The second main ingredient to prove Theorem \ref{diffest} comes from a purely additive-combinatorial observation by Shkredov and Schoen (\cite{SS0}, Lemma 3.1), which has recently allowed for a several incremental
improvements towards a number of open questions in field combinatorics in \cite{SS0},  \cite{SS}, \cite{SV}.

This observation is the content of the following Lemma \ref{twothr}, quoting
which requires some notation also used in the sequel. Through the rest of
this section $A,B$ denote any sets in an Abelian group $(G,+)$. In the
context of the field $\C$, Lemma \ref{twothr} will apply to the addition
operation, so the following notation $E$ will stand for the additive energy,
rather that the multiplicative energy $E_*$, which has been used in
the proof of the sum-product estimate in Theorem \ref{sumest}.

For any $d\in A-A$, set
\begin{equation}
A_d=\{a\in A: \,a+d\in A\}.\label{ad}
\end{equation}
The quantity
$$
E(A,B)=|\{(a_1,a_2,b_1,b_2)\in A\times A\times B\times B:\;a_1-a_2 = b_1-b_2\}|,
$$
is referred to as the additive energy of $A,B$.
By the Cauchy-Schwarz inequality, rearranging the terms in the above definition of $E(A,B)$, one has
\begin{equation}\label{esc}
E(A,B)|A\pm B|\geq |A|^2|B|^2.\end{equation}
Indeed, if $d$ or $x$ is, respectively, an element of $A - B$ or $A + B$ and $n(d)$ or $n(x)$ its the number of its realisations  as a difference or sum of a pair of elements from $A\times B$,  i.e, e.g.
$$
n(d) = |\{(a,b)\in A\times B:\,d=a-b\}|,
$$
the estimate (\ref{esc}) follows from the fact that
\begin{equation}
E(A,B) = \sum_{d\in A-B} n^2(d)=\sum_{x\in A+B} n^2(x).
\label{epm}\end{equation}
The quantity $E(A,A)=E(A)$ is referred to as the (additive) energy of $A$.
Note that according to  (\ref{ad}), $n(d)=|A_d|$, for $d\in A-A$.

We will also need the ``cubic energy'' of $A$, defined as follows:
\begin{equation}\label{threee}
E_3(A) = |\{(a_1,\ldots,a_6)\in A\times\ldots\times A:\;a_1-a_2=a_3-a_4=a_5-a_6\}|.
\end{equation}
This definition implies (see \cite{SS}, Lemma 2) that
\begin{equation}
\label{twothree} E_3(A) = \sum_{d\in A-A} E(A,A_d).
\end{equation}

To see this, let us write for all $d\in A-A$ all quadruples satisfying
\begin{equation}\label{firsteq}
a_1-a_3=a_2-a_4
\end{equation}
with $a_1,a_2\in A, a_3,a_4\in A_d$. The list will contain $\sum_{d\in A-A} E(A,A_d)$
quadruples. Any quadruple is repeated as many times as many different values of
$d$ with $a_3,a_4\in A_d$ exist. This number is the number of pairs $(a_5,a_6)$
with $a_5,a_6\in A$ and
\begin{equation}\label{secondeq}
a_5-a_3=a_6-a_4.
\end{equation}
But the number of collections $(a_1,\dots,a_6)$ of elements from $A$ satisfying both
(\ref{firsteq}) and (\ref{secondeq}) is just $E_3(A)$.

The following statement is part of Corollary 3 in \cite{SS}. Since the formulation we use is slightly different from the original one and in an effort to make this paper self-contained, we have chosen to include its proof as well.

\begin{lemma}\label{twothr} Let $A$ be a finite non-empty additive set. For any $D'\subseteq A-A$, one has
\begin{equation}
\label{th} \sum_{d\in D'} |A_d||A- A_d| \geq \frac{|A|^2\left( \sum_{d\in D'}  |A_d|^{\frac{3}{2}}\right)^2 }{E_3(A)}.
\end{equation}
\end{lemma}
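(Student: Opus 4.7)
The plan is to deduce the inequality from a single application of the Cauchy--Schwarz inequality, combined with the two identities and inequalities already in place, namely the energy bound (\ref{esc}) applied with $B = A_d$, and the decomposition (\ref{twothree}) of $E_3(A)$.

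First I would rewrite $|A_d|^{3/2}$ in a form amenable to Cauchy--Schwarz by splitting
$$
|A_d|^{3/2} \;=\; \frac{|A_d|}{|A-A_d|^{1/2}} \,\cdot\, |A_d|^{1/2}|A-A_d|^{1/2},
$$
and then apply Cauchy--Schwarz to the sum over $d\in D'$ to obtain
$$
\left(\sum_{d\in D'} |A_d|^{3/2}\right)^{\!2} \;\leq\; \left(\sum_{d\in D'} \frac{|A_d|^2}{|A-A_d|}\right)\left(\sum_{d\in D'} |A_d|\,|A-A_d|\right).
$$

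Next, I would control the first factor on the right. By (\ref{esc}) applied with the two sets $A$ and $A_d$ (using the symmetric form with $|A-A_d|$), one has $E(A,A_d)\,|A-A_d| \geq |A|^2|A_d|^2$, which I rearrange as
$$
\frac{|A|^2|A_d|^2}{|A-A_d|} \;\leq\; E(A,A_d).
$$
Summing over $d\in D'\subseteq A-A$ and invoking (\ref{twothree}), I get
$$
|A|^2 \sum_{d\in D'} \frac{|A_d|^2}{|A-A_d|} \;\leq\; \sum_{d\in D'} E(A,A_d) \;\leq\; \sum_{d\in A-A} E(A,A_d) \;=\; E_3(A).
$$

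Substituting this bound into the Cauchy--Schwarz inequality above and rearranging yields the claimed estimate
$$
\sum_{d\in D'} |A_d|\,|A-A_d| \;\geq\; \frac{|A|^2\bigl(\sum_{d\in D'} |A_d|^{3/2}\bigr)^{\!2}}{E_3(A)}.
$$
I do not anticipate a genuine obstacle here: the only mildly non-obvious step is the choice of splitting that makes Cauchy--Schwarz produce precisely the ratio $|A_d|^2/|A-A_d|$, which is exactly the quantity that (\ref{esc}) controls by the energy $E(A,A_d)$ that appears in the decomposition (\ref{twothree}) of $E_3(A)$.
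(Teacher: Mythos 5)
Your proof is correct and is essentially the paper's own argument: both rest on the same three ingredients, namely (\ref{esc}) applied with $B=A_d$, a single Cauchy--Schwarz inequality in the variable $d\in D'$, and the identity (\ref{twothree}) together with the trivial bound $\sum_{d\in D'}E(A,A_d)\le E_3(A)$. The only difference is cosmetic ordering: you apply Cauchy--Schwarz first (splitting $|A_d|^{3/2}$ so as to produce $|A_d|^2/|A-A_d|$) and then invoke the energy bound, whereas the paper invokes the energy bound first and then applies Cauchy--Schwarz to $\sum_d\sqrt{|A_d||A-A_d|}\sqrt{E(A,A_d)}$.
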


\begin{proof} To verify (\ref{th}) observe that  by the inequality (\ref{esc})
applied to the sets $A,A_d$ for a fixed $d$ we have
$$ \sqrt{|A- A_d|} \sqrt{E(A,A_d)} \geq |A||A_d|.
$$
Multiplying both sides by $\sqrt{|A_d|}$ and summing over $d\in D'$, then applying once again the Cauchy-Schwartz inequality to the left-hand side yields
$$
\sqrt{ \sum_{d\in D'} |A_d||A- A_d| } \sqrt{ \sum_{d\in D'} E(A,A_d)}  \geq |A|\sum_{d\in D'}|A_d|^{\frac{3}{2}}.
$$
Squaring both sides and using (\ref{twothree}) completes the proof of Lemma \ref{twothr}.\end{proof}

\begin{corollary}\label{twoen} Let $A$ be a finite non-empty additive set. For any $D'\subseteq A-A$, one has
\begin{equation}
E(A,A-A)E_3(A) \geq |A|^2\left( \sum_{d\in D'}  |A_d|^{\frac{3}{2}}\right)^2
\label{lbem}\end{equation}\end{corollary}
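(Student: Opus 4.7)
The plan is to derive the corollary from Lemma \ref{twothr} by producing the pointwise lower bound
$$E(A,A-A) \;\ge\; \sum_{d \in A-A} |A_d|\,|A-A_d|,$$
since the right-hand side dominates $\sum_{d \in D'}|A_d|\,|A-A_d|$ for any $D' \subseteq A-A$. Once this is in hand, substituting into (\ref{th}) immediately yields $E(A,A-A)\,E_3(A) \ge |A|^2\bigl(\sum_{d\in D'}|A_d|^{3/2}\bigr)^2$.

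First I would stratify the energy $E(A,A-A)$ by the common difference. For each $d$, the quadruples $(a_1,a_2,b_1,b_2)\in A\times A\times(A-A)\times(A-A)$ with $a_1-a_2=b_1-b_2=d$ decouple into a choice of the $A$-pair and a choice of the $(A-A)$-pair, giving
$$E(A,A-A) \;=\; \sum_{d\in A-A} |A_d|\cdot r_{A-A}(d),$$
where $r_{A-A}(d)$ denotes the number of representations of $d$ as a difference of two elements of $A-A$. This is direct from the definition and (\ref{epm}).

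Next I would show the pointwise estimate $r_{A-A}(d) \ge |A-A_d|$ by exhibiting an explicit injection from $A-A_d$ into the set of such representations. Given $e\in A-A_d$, pick any writing $e=a-a'$ with $a\in A$, $a'\in A_d$; set $b_1:=e$ and $b_2:=e-d=a-(a'+d)$. By definition of $A_d$, $a'+d\in A$, so $b_2\in A-A$, while $b_1\in A-A_d\subseteq A-A$, and $b_1-b_2=d$. The map $e\mapsto(b_1,b_2)$ is injective because $e=b_1$ can be recovered, so $r_{A-A}(d)\ge|A-A_d|$.

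Combining the factorisation of $E(A,A-A)$ with this pointwise bound gives the desired inequality, and plugging it into Lemma \ref{twothr} finishes the proof. The only step that requires any genuine thought is the pointwise bound $r_{A-A}(d)\ge|A-A_d|$: the move from $b_1=a-a'\in A-A_d$ to $b_2=a-(a'+d)\in A-A$ depends crucially on the defining property of $A_d$, and is the one place where some structural input rather than pure bookkeeping is needed. Once one notices the map $e\mapsto(e,e-d)$, the rest is formal.
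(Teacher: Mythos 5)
Your proposal is correct and follows essentially the same route as the paper: both establish the Katz--Koester bound $E(A,A-A)\geq\sum_{d}|A_d|\,|A-A_d|$ by writing $E(A,A-A)=\sum_d |A_d|\,n'(d)$ and bounding the number $n'(d)$ of representations of $d$ as a difference of two elements of $A-A$, then plug this into Lemma \ref{twothr}. Your injection $e\mapsto(e,\,e-d)$ indexed by $e\in A-A_d$ is just a mirrored parametrisation of the paper's representations $d=(u-a)-(v-a)$ with $v-a\in A_d-A$, so the arguments coincide.
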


\begin{proof} The proof is based on an observation that \cite{SS} credits to Katz and Koester (see \cite{KK}) that the left-hand side of (\ref{th}) provides a lower bound for $E(A,A-A)$. Indeed,
each $d \in A-A$ has $|A_d|$ representations $d=u-v$
with $u\in A, v\in A_d$. The same $d$ also has at least $|A-A_d|$ representations
$d=u-v$ with $u,v\in A-A$. Indeed, given $d$, for any $v\in A_d$ and
$a\in A$ one can find $u\in A$ so that $d =  (u-a) - (v-a)$, with $|A-A_d|$ distinct values for the second bracket. Hence, if $n(d)$ is the number of representations of $d$ as an element of $A-A$ and $n'(d)$ -- as an element of $(A-A) - (A-A)$, then
$$
E(A,A-A) = \sum_d n(d) n'(d) \geq \sum_d |A_d||A- A_d|.
$$
This, together with (\ref{th}) completes the proof of Corollary \ref{twoen}. \end{proof}

\begin{remark} In the forthcoming main body of the proof of Theorem \ref{diffest} we will use the Szemer\'edi-Trotter theorem to yield upper bounds for the two energy terms in the left-hand side of the estimate (\ref{lbem}) for the additive point set
$P\subset\C^2$, defined in Section \ref{prelim} as to the ratio and product set
cases.\end{remark}

From now on, the above $D'\subseteq A-A$  be a popular subset of the difference set $A-A$, defined as follows:
\begin{equation}\label{dprime}
D'=\left\{d\in A-A:\; |A_d|\geq \frac{1}{2}\frac{|A|^2}{|A-A|}\right\}.
\end{equation}
Then, since ${\displaystyle \sum_{d\in (D')^c}|A_d| \leq \frac{1}{2}|A|^2}$,
$$
\sum_{d\in D'}  |A_d|^{\frac{3}{2}}\geq\left(\frac{|A|^2}{2|A-A|}\right)^{\frac{1}{2}}\sum_{d\in D'}  |A_d|\geq  \frac{1}{4}\left(\frac{|A|^2}{|A-A|}\right)^{\frac{1}{2}} |A|^2.
$$
Substituting this in the statement of  Corollary \ref{twoen}, let us formulate the result as the final corollary, which summarises the above-mentioned arithmetic component of the argument.
\begin{corollary}\label{arco}Let $A$ be a finite non-empty additive set. Then \begin{equation}
\label{thth}
 E_3(A)E(A,A-A) \gg\frac{|A|^8 }{|A-A|}.
\end{equation}
\end{corollary}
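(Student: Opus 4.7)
The plan is to apply Corollary \ref{twoen} with the specific popular subset $D'\subseteq A-A$ defined in (\ref{dprime}) and then to obtain an explicit lower bound for the sum $\sum_{d\in D'}|A_d|^{3/2}$ by combining the popularity threshold with the trivial identity $\sum_{d\in A-A}|A_d|=|A|^2$.

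First I would observe that this identity holds because each pair $(a,a')\in A\times A$ contributes exactly one unit to the term indexed by $d=a-a'$. Since each $d\notin D'$ satisfies $|A_d|<\tfrac{1}{2}|A|^2/|A-A|$ by the definition of $D'$, the contribution of the complement is bounded by
$$
\sum_{d\in (D')^{c}}|A_d|\ \le\ |A-A|\cdot\frac{|A|^2}{2|A-A|}\ =\ \frac{|A|^2}{2},
$$
so that $\sum_{d\in D'}|A_d|\ge \tfrac{1}{2}|A|^2$ absorbs at least half of the total count.

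Next I would factor one copy of $|A_d|^{1/2}$ out of each summand of $\sum_{d\in D'}|A_d|^{3/2}$ and estimate it below by the uniform threshold $(|A|^2/(2|A-A|))^{1/2}$ available on $D'$. This gives
$$
\sum_{d\in D'}|A_d|^{3/2}\ \ge\ \left(\frac{|A|^2}{2|A-A|}\right)^{\!1/2}\sum_{d\in D'}|A_d|\ \gg\ \frac{|A|^{3}}{|A-A|^{1/2}}.
$$

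Finally, squaring this inequality and substituting into (\ref{lbem}) from Corollary \ref{twoen} yields
$$
E(A,A-A)\,E_3(A)\ \ge\ |A|^2\!\left(\sum_{d\in D'}|A_d|^{3/2}\right)^{\!2}\ \gg\ |A|^2\cdot\frac{|A|^{6}}{|A-A|}\ =\ \frac{|A|^{8}}{|A-A|},
$$
which is precisely (\ref{thth}). There is essentially no obstacle in this final step: the real content of the corollary lies in Lemma \ref{twothr} and Corollary \ref{twoen}, and what remains is a one-shot pigeonholing that packages those results into the form in which they will be used against the Szemer\'edi-Trotter based upper bounds in the ensuing proof of Theorem \ref{diffest}.
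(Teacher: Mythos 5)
Your proposal is correct and follows essentially the same route as the paper: the same popular set $D'$ from (\ref{dprime}), the same bound $\sum_{d\in D'}|A_d|\ge \tfrac12|A|^2$ obtained by bounding the complement, the same factoring of $|A_d|^{1/2}$ via the threshold, and substitution into Corollary \ref{twoen}. No issues.
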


We conclude this preliminary section with a remark discussing some recent
applications of Lemma \ref{twothr} and its corollaries. The content of the
remark is not used directly in the main body of the proof of Theorem
\ref{diffest}.  \begin{remark} The estimate
(\ref{thth}) enabled Schoen and Shkredov \cite{SS}, to achieve progress on
the sum set of a convex set problem. They proved that if $A=f([1,\ldots,N])$, where $f$ is
a strictly convex real-valued function, then $|A-A|\gg|A|^{\frac{8}{5}}\log^{-\frac{2}{5}}|A|$, having improved the previously known exponent $\frac{3}{2}$.
The conjectured exponent in the convex set sum set problem is $2$, modulo a factor of $\log|A|$.
Li \cite{Li} -- see also his recent work with Roche-Newton \cite{LR} --
 pointed out that the approach of \cite{SS} can be adapted to the sum-product problem, using
 a variant of the well-known sum-product construction by Elekes \cite{E}. This improves the exponent $\frac{5}{4}$, obtained by Elekes within his construction to $\frac{14}{11}$, modulo a factor of $\log|A|$.
The same exponent $\frac{14}{11}$, modulo a factor of $\log|A|$, had been coincidentally obtained in Solymosi's work \cite{So1}, as stated in (\ref{wr1}) above. Also recently Jones and Roche-Newton \cite{JRN} applied the estimate (\ref{thth}) to  improve
the best known lower bound
on the size of $A(A+1)$  in the real setting. (The latter paper also contains a new lower bound on $|A(A+1)|$ in a finite field setting, obtained via a different technique.) \end{remark}

\subsection{The main body of the proof of Theorem \ref{diffest}}
Recall the definition of the point set $P$, as well as the quantities $L,N$ in the end of Section \ref{prelim}, relative to either the ratio or product set case. In either case, let us
consider the vector sum set of the set $P\subset \C^2$ with some
point set $Q$, such that $|Q|\geq |P|$. (In the sequel we will set $Q =- P$ or $P- P$). Recall that the set $P$ contains all points of $A\times A$ supported on a popular set of lines through the origin $L$. To obtain the vector sums, one translates the
lines from $L$ to each point of $Q$, getting thereby some set ${\mathcal L}$ of lines with $|{\mathcal L}|\leq |L||Q|$.

In both the ratio and product set cases, it can be assumed that
\begin{equation}
|L|\ge \frac{1}{2}N.
\label{L,N_suppos}\end{equation}
The estimate (\ref{L,N_suppos}) is clear in the ratio set case, where $N\leq |A|\leq 2|L|$.

As to the product set case $|P|\approx |L|N$, we will need the following lemma, which will be used once more in the end of the proof of Theorem \ref{diffest}.

\begin{lemma}\label{ll} There exists $L,N$ satisfying (\ref{min}) and such that
\begin{equation}\label{nbd}
N\ll \frac{|A- A|^2|A\cdot A|}{|A|^3}.\end{equation}\end{lemma}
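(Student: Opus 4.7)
The strategy is a dyadic pigeonhole argument among the $O(\log|A|)$ dyadic levels $L_N = \{l \in A\!:\!A : n(l) \in (N/2, N]\}$. I would arrange matters so that the ``heavy multiplicative tail'', the contribution to $E_*(A)$ coming from dyadic $N$ exceeding $N^* := C|A-A|^2|A\cdot A|/|A|^3$ (for a large absolute constant $C$), accounts for at most a fixed fraction of $E_*(A)$. The bulk of $E_*(A)$ is then carried by dyadic $N \leq N^*$, and a dyadic pigeonhole over these $O(\log|A|)$ levels yields an $N \leq N^*$ satisfying (\ref{min}).

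The heart of the proof is the incidence estimate
$$
|L_N| \, N^3 \ll |A-A|^2 \, |A|
$$
for every dyadic $N$. To prove this, I would apply the Szemer\'edi-Trotter theorem in $\C^2$ (via T\'oth-Zahl) to the point set $(A-A) \times A$ and the line family $\{\ell_{l,b} : y = l(x+b) \mid l \in L_N,\, b \in A\}$. Distinct parameter pairs $(l,b)$ produce distinct lines (they have distinct slope-intercept data), so the line count is $|L_N|\,|A|$. Each $\ell_{l,b}$ contains the point $(a-b,\, la)$ for every $a \in A(l)$, and this lies in $(A-A)\times A$ since $b \in A$ and $la\in A$; these $\geq N/2$ points are distinct because $a \mapsto la$ is injective. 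Thus the total incidence count is $\gg |L_N|\,|A|\,N$, and balancing against the non-linear Szemer\'edi-Trotter term $(|A-A|\,|A|^2\,|L_N|)^{2/3}$ and cubing yields the claimed bound; the linear Szemer\'edi-Trotter terms give even stronger consequences when combined with the Pl\"unnecke-Ruzsa bound $N \leq |A-A|^2/|A|$, so they cause no trouble.

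Rewriting the above as $|L_N|N^2 \ll |A-A|^2|A|/N$ and summing the resulting geometric series in $1/N$ over dyadic $N > N^*$ gives
$$
\sum_{\text{dyadic }N > N^*} |L_N| N^2 \;\ll\; \frac{|A-A|^2 |A|}{N^*} \;=\; \frac{|A|^4}{C\,|A\cdot A|}.
$$
Invoking (\ref{emult}), namely $E_*(A) \geq |A|^4/|A\cdot A|$, and choosing $C$ sufficiently large absolute, the right-hand side is at most $E_*(A)/4$. Consequently $\sum_{\text{dyadic }N \leq N^*}|L_N|N^2 \geq (3/4)\,E_*(A)$, and pigeonhole over the $O(\log|A|)$ dyadic levels $N \leq N^*$ produces an $N \leq N^*$ with $|L_N|N^2 \gg E_*(A)/\log|A|$, which is the desired pair $(L,N)$.

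The main obstacle is the incidence step: one has to choose the ``right'' point-line geometry. Using $(A-A) \times A$ with lines of slope $l \in L_N$ passing through $(-b,0)$ for $b \in A$ is what produces the factor $|A-A|^2|A|$ on the incidence side, whereas other natural configurations (e.g.\ $(A-A) \times (A\cdot A)$ with Elekes-style lines) yield incomparable or strictly weaker bounds. Once the geometry is set up, the injectivity checks and the cubing of the Szemer\'edi-Trotter inequality are routine, and the $\log|A|$ cost of the final pigeonhole is precisely absorbed by the $\log|A|$ in (\ref{min}), leaving a bound on $N$ with no logarithmic loss.
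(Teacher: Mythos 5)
Your proposal is correct and is essentially the paper's own argument: an Elekes-style application of the Szemer\'edi--Trotter theorem giving $|L_N|N^3\ll|A-A|^2|A|$ at each dyadic level (your configuration, points $(A-A)\times A$ and lines $y=l(x+b)$, is just the dual of the paper's, points $A\times L_t$ and lines $y=(d+x)/a$, and yields the identical count), followed by the same threshold $N^*\approx |A\cdot A|\,|A-A|^2/|A|^3$ and the same dyadic pigeonhole once the heavy tail is shown to be a small fraction of $E_*(A)$. The one imprecision is your reason for discarding the linear Szemer\'edi--Trotter terms: the bound $N\le|A-A|^2/|A|$ is trivially true (as $N\le|A|\le|A-A|$) and does not by itself settle the case where the point term dominates; what does work, and keeps your incidence claim intact in all cases, is that $|L_N||A|N\ll|A-A||A|$ gives $|L_N|N^3\ll|A-A|N^2\le|A-A|^2|A|$ (using $N^2\le|A|\,|A-A|$), while $|L_N||A|N\ll|L_N||A|$ gives $N\ll1$ and hence $|L_N|N^3\ll|L_N|\le|A|^2\le|A-A|^2|A|$.
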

A variant of Lemma \ref{ll} can be found in the recent papers \cite{LR},
\cite{SS1} and represents a slight generalisation of the well known approach to
the sum-product problem due to Elekes, \cite{E}. The proof of Lemma \ref{ll} is
given in the final section of the paper.

The bound  (\ref{nbd}) for $N$ and the fact that $LN^2$ is bounded from below by (\ref{min}) would yield
under the assumption $|L|\leq N$ that
$$\frac{|A- A|^6|A\cdot A|^3}{|A|^9}\gg LN^2\gg\frac{|A|^4}{|A\cdot A|\log|A|}.$$
Therefore,
$$|A- A|^6|A\cdot A|^4 \gg \frac{|A|^{13}}{\log |A|},$$
which is better than (\ref{resdif}). Thus we assume the estimate (\ref{L,N_suppos}) henceforth.
\medskip

We return to analysing the set of lines $\mathcal L$. The Szemer\'edi-Trotter theorem enables one to estimate $|\mathcal L|$ from below. We have the following estimate for the number of incidences
\begin{equation}
|L||Q|\leq |I( Q,\mathcal L)|\ll |\mathcal L|^{\frac{2}{3}} |Q|^{\frac{2}{3}} + |\mathcal L| + |Q|.
\label{aux}\end{equation}
Since it can be assumed that $|L|$ is bigger than some absolute constant (as the target estimates (\ref{resdif}) are up to absolute constants), the term $|Q|$ in (\ref{aux})
cannot dominate the estimate. Nor can the term $|\mathcal L|$, for otherwise
$|\mathcal L|> |Q|^2$. This, since by construction of $\mathcal L$ one has
$|\mathcal L|\leq  |L||Q|,$ would imply $|L|> |Q|$, but in out set-up
$|Q|\geq|P|\geq|L|.$

Thus it follows from (\ref{aux}) that

\begin{equation}
|{\mathcal L}|\gg |L|^{\frac{3}{2}}|Q|^{\frac{1}{2}}. \label{lowerl}\end{equation} Let us call the number of points of $Q$ on a particular line $l\in \mathcal L$, the weight  $m(l)$ of $l$. The total weight $W$ of all lines in the collection $\mathcal L$
is by construction equal to $|L||Q|$.

Let us now study the set $P+Q$. The vector sums in $P+Q$ are obtained by the parallelogram rule, hence we observe that $P+Q$ is supported on the union of the lines from $\mathcal L$, as subsets of $\mathbb C^2$:
\begin{equation}\label{loc}
P+Q\subset \bigcup_{l\in \mathcal L}l.
\end{equation}

Our goal now is to obtain upper bounds, in terms of $t\geq 1$, on the number of elements of $P+Q$, whose number of realisations as a sum $p+q:\,p\in P,q\in Q$ is at least $t$. The same line
$l\in \mathcal L$ can contribute to the same vector sum $x=p+q\in P+Q, q\in l$, at most
$\min(N,m(l))$ times. In view of this, we can lower the weights of lines, which are
``too heavy'': whenever $m(l)\geq  N$, let us redefine it as $N$.
After this has been done, $W$ denoting the total weight of the lines in ${\mathcal L}$,
one has
\begin{equation}\label{weight}
W\leq |L||Q|.
\end{equation}
Also, we denote
\begin{equation}\label{weight2}
\bar m = \sqrt{\frac{|Q|}{|L|}}.
\end{equation}

The Szemer\'edi-Trotter
theorem, namely (\ref{work}), tells one that the weight distribution over $\mathcal L$  obeys the inverse cube law. I.e.,
for $t\leq N$, one has
\begin{equation}
|\mathcal L_t| =|\{l\in \mathcal L:\, m(l)\geq t\}|\ll \frac{|Q|^2}{t^3}+\frac{|Q|}{t}\ll \frac{|Q|^2}{t^3},
\label{incube}\end{equation}
 as since $N\leq \sqrt{2|L|N}\leq2\sqrt{|P|}\leq 2\sqrt{|Q|}$, the trivial
term $\frac{|Q|}{t}$ gets dominated by the first term. It also follows from
(\ref{incube}), via the standard dyadic summation in $t$, that the total weight
$W(\mathcal L_t)$ supported on the lines from $\mathcal L_t
$ is bounded by
 \begin{equation}
W(\mathcal L_t) \ll \frac{|Q|^2}{t^2},
\label{incubew}\end{equation}
(To see this one partitions $\mathcal L_t$ into ``dyadic subsets'' of lines whose weights $\tau\leq N$ are $2^j t\leq\tau< 2^{j+1} t$ for $j\geq 0$. It follows from (\ref{incube}) that $W(\mathcal L_t) \ll \frac{|Q|^2}{t^2} \sum_{j\geq 0} 2^{-2j+1}.$)

Suppose, in view of (\ref{loc}) some $x\in P+Q$ is incident to $k\geq 1$ lines $l_1,\ldots, l_k\in \mathcal L$. We then have an inequality
\begin{equation} n(x)\leq m(x),\label{nmb}\end{equation}
where
\begin{equation}\label{mx}
n(x)=|\{(p,q)\in P\times Q: x=p+q\}|, \qquad  m(x) = \sum_{i=1}^k m(l_k).
\end{equation}

Observe now that if $n(x)>N$, then $x\in P+Q$ must be incident to more than one line from $\mathcal L$. Indeed, each line $l\in \mathcal L$ may contribute at most $N$ to the quantity $n(x)$.

Hence, let ${\mathcal P}(\mathcal L)$ denote the set of all pair-wise intersections of lines from $\mathcal L$.
We can therefore bound the maximum number of points in $P+Q$, whose number of realisations $n(x)$
is at least $t> N$, in terms of $t$, by way of bounding the number of
$x\in {\mathcal P}(\mathcal L)$, with $m(x)\geq t$. The latter bound will follow from Theorem \ref{STw}
together with the inverse cube weight distribution bounds (\ref{incube},
\ref{incubew}) over the set of lines $\mathcal L$. Namely, we have the following lemma, which also has its prototype in \cite{IKRT}, Lemma 6.

\begin{lemma}\label{wst}
Suppose that $|Q|\geq |P|.$
Then for some absolute $C$ and $t:\;CN\leq t \leq |P|$,
\begin{equation}\label{use}
|\{x\in P+Q:\;n(x)\geq t\}|\ll \frac{|L|^{\frac{3}{2}} |Q|^{\frac{5}{2}}}{t^3},
\end{equation}
\end{lemma}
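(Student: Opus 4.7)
The plan is to apply the weighted Szemerédi--Trotter theorem (Theorem~\ref{STw}) to the configuration $(\mathcal P_t,\mathcal L)$, where $\mathcal P_t:=\{x\in P+Q:n(x)\geq t\}$. The bridge from the combinatorial count $n(x)$ to weighted incidences is (\ref{nmb}): since $n(x)\leq m(x)=\sum_{l\ni x}m(l)$ and $n(x)\geq t$ on $\mathcal P_t$, summing over $\mathcal P_t$ gives
\[
t\,|\mathcal P_t|\;\leq\;i_m(\mathcal P_t,\mathcal L).
\]

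First I would try the straightforward application with cap $\mu=N$ and total weight $W\leq |L||Q|$ from (\ref{weight}):
\[
t|\mathcal P_t|\ll N^{1/3}\bigl(|\mathcal P_t|\,|L||Q|\bigr)^{2/3}+N|\mathcal P_t|+|L||Q|.
\]
Choosing $C$ large enough, the hypothesis $t\geq CN$ absorbs the $N|\mathcal P_t|$ term into the left side, leaving only the cubic term and the $|L||Q|$ term on the right. The cubic alternative yields $|\mathcal P_t|\ll N(|L||Q|)^2/t^3$, which matches the target only under the favourable relation $N^2|L|\leq |Q|$ that we do not have for free.

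To close the general case I would split $\mathcal L$ by line weight at the crossover scale $\tau_*:=\sqrt{|Q|/|L|}$, where the trivial bound $|\mathcal L_\tau|\leq |L||Q|/\tau$ meets the inverse-cube bound $|\mathcal L_\tau|\ll |Q|^2/\tau^3$ of (\ref{incube}). Partition $\mathcal P_t$ into $\mathcal P_t^{\mathrm{light}}$ and $\mathcal P_t^{\mathrm{heavy}}$ according to whether the bulk of $m(x)\geq t$ is contributed by lines below or above weight $\tau_*$. For the light subset, weighted Szemerédi--Trotter with cap $\mu=\tau_*$ and $W\leq |L||Q|$ (each being the sharp bound on this slab) produces exactly
\[
|\mathcal P_t^{\mathrm{light}}|\ll \tau_*\,(|L||Q|)^2/t^3 \;=\; |L|^{3/2}|Q|^{5/2}/t^3,
\]
which is the target. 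For the heavy subset I would further dyadically sub-split by weight $\tau\in[\tau_*,N]$, and for each slab apply the unweighted Szemerédi--Trotter theorem to the $\ll |Q|^2/\tau^3$ lines in that slab, noting that a point of $\mathcal P_t^{\mathrm{heavy}}$ lying in the slab is incident to $\gtrsim t/\tau$ of its lines. The per-slab count $|Q|^4/(\tau^3 t^3)$ forms a geometric series in $\tau\geq\tau_*$, summing to $|Q|^4/(\tau_*^3 t^3)=|L|^{3/2}|Q|^{5/2}/t^3$, once again matching the target.

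The main obstacle will be managing the linear residuals in each Szemerédi--Trotter application: the $\mu|\mathcal P_t|$ term of weighted Szemerédi--Trotter on the light slab, and the $|\mathcal L_\tau|/r$ term of its unweighted counterpart on each heavy slab, each contributing a piece of order $|L||Q|/t$. These are absorbed using the hypothesis $t\leq |P|$ together with the total-weight bound (\ref{incubew}) $W(\mathcal L_{\geq\tau})\ll |Q|^2/\tau^2$, which at $\tau=\tau_*$ yields exactly $|L||Q|$ and so feeds back cleanly into the weighted Szemerédi--Trotter bookkeeping, closing the argument without any logarithmic loss.
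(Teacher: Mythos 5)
Your overall strategy is the paper's: pass from $n(x)$ to weighted incidences via (\ref{nmb}), bound $t|\mathcal P_t|$ by $i_m(\mathcal P_t,\mathcal L)$, and split $\mathcal L$ by weight at the crossover scale $\sqrt{|Q|/|L|}$, which is exactly the paper's $\bar m$ of (\ref{weight2}). The genuine gap is at the point where the paper has to work hardest, namely the total-weight residual $|L||Q|/t$. This term is \emph{not} dominated by the target $|L|^{\frac32}|Q|^{\frac52}/t^3$ once $t\gg (|L||Q|^3)^{\frac14}$, and that range is non-empty under the hypothesis $t\leq |P|$: with $Q=-P$ and $t=|P|$ the residual is $|L|$, while the target is $|L|^{\frac32}/|P|^{\frac12}$, smaller by a factor $\sqrt{|P|/|L|}\geq 1$. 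Your proposed absorption (``$t\leq|P|$ together with (\ref{incubew}) at $\tau=\tau_*$, which yields exactly $|L||Q|$'') merely reproduces the same total-weight bound and does not make the term small; in fact no bookkeeping that uses only the weight cap, the total weight $W\leq|L||Q|$ and the inverse-cube law (\ref{incube}) can succeed, since an abstract family of weighted lines satisfying all of these (e.g.\ disjoint pencils of $t/\bar m$ concurrent lines of weight $\bar m$) really does have $\sim|L||Q|/t$ points of weight $\geq t$. The missing ingredient is the structural fact the paper invokes: the lines of $\mathcal L$ come in only $|L|$ directions, so at most $|L|$ of them pass through any point of $\mathcal P(\mathcal L)$; hence for $t\geq 2|L|\bar m$ the lines of individual weight below $t/(2|L|)$ contribute at most $t/2$ to $m(x)$ and can be discarded, after which the relevant total weight is $W_{\frac{t}{2|L|}}\ll |Q|^2|L|^2/t^2$ by (\ref{incubew}), and the residual becomes $|Q|^2|L|^2/t^3\leq |L|^{\frac32}|Q|^{\frac52}/t^3$ since $|Q|\geq|L|$. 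This is the content of (\ref{refined})--(\ref{useprime2}) and is absent from your argument.

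A secondary problem is the heavy half of your decomposition. A point whose mass $m(x)\geq t$ comes mostly from lines of weight above $\tau_*$ need not be incident to $\gtrsim t/\tau$ lines of any single dyadic slab $[\tau,2\tau)$: the mass can be spread over all $O(\log|A|)$ slabs. Making the per-slab claim honest by pigeonhole replaces $t$ by $t/\log|A|$, and the cubic dependence then costs a factor $\log^3|A|$, which neither the statement of the lemma nor the logarithmic exponents in Theorem \ref{diffest} can absorb. The paper avoids this by summing \emph{weighted incidences} slab by slab, as in (\ref{alt})--(\ref{int}), where the per-slab bounds decay geometrically in $j$ thanks to (\ref{onewb}), so the whole sum is dominated by its first term with no logarithmic loss. (A smaller point: on the light part you should take $\mu=\min(\tau_*,N)$ rather than $\tau_*$, since when $Q=P-P$ one can have $\tau_*>N$ and the residual $\tau_*|\mathcal P_t|$ is then not absorbed by $t\geq CN$; with the minimum both the cubic and linear terms behave as you intend.)
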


\begin{proof}
Observe that for any point set ${\mathcal P}$, the number of weighted incidences $i_m (\mathcal P,\mathcal L)$ of ${\mathcal L}$ with ${\mathcal P}$ can be bounded from above using dyadic decomposition of ${\mathcal L}$ by weight in excess of $\bar m$, as
 follows:
\begin{equation}
i_m (\mathcal P,\mathcal L) \leq \sum_{j= 0}^{\lceil\log_2 N/\bar m\rceil}\, i_m(\mathcal P,\mathcal L_{2^j \bar m}).
\label{alt}\end{equation}
Above, the notation $\mathcal L_{\bar m},$  corresponding to $j=0$ stands for the subset of ${\mathcal L}$ containing all
those lines whose weight does not exceed $\bar m$, and
$$\mathcal L_{2^j \bar m}=\{l\in \mathcal L: \;2^{j-1}\bar m < m(l) \leq 2^{j}\bar m\},\qquad j\geq 1.
$$
To estimate each individual term $i_m(\mathcal P, \mathcal L_{2^j\bar m})$ in the sum (\ref{alt}),
one can use the estimate  (\ref{STew}) of Theorem \ref{STw}. The quantity $2^j\bar m$ then replaces the
maximum weight $\mu$ in (\ref{STew}). The total weight $W$ in (\ref{STew}) will be replaced by the total weight $W_{2^j\bar m}$ of the line set $\mathcal L_{2^j\bar m}$.
In view of (\ref{incubew}), the quantity $W_{2^j\bar m}$ is bounded as follows:
\begin{equation}W_{2^j\bar m}\ll \frac{|Q|^2}{2^{2j}\bar m^2}=\frac{|Q||L|}{2^{2j}}.
\label{onewb}\end{equation}

Thus
\begin{equation}\label{estss}
i_m(\mathcal P, \mathcal L_{2^j\bar m}) \ll (2^j\bar m)^{\frac{1}{3}}(|\mathcal P|Q||L|2^{-2j})^{\frac{2}{3}}
+ 2^j\bar m |\mathcal P|+ W_{2^j\bar m}.
\end{equation}

Using (\ref{onewb}), it follows that in the summation (\ref{alt}), the term
$j=0$ dominates the net contribution of the first and the third terms in
the estimate (\ref{estss}) for $j>0$. Conversely, the dominant value of the
second term
in (\ref{estss}) corresponds to the maximum value $N$ of the lines' weight. Thus
\begin{equation}\begin{aligned}
i_m (\mathcal P,\mathcal L) & \ll  \bar m^{\frac{1}{3}}(|\mathcal P|W)^{\frac{2}{3}}  + N{|\mathcal P|} + W \\
&\ll |\mathcal P|^{\frac{2}{3}} |L|^{\frac{1}{2}}|Q|^{\frac{5}{6}} + N{|\mathcal P|} +|L||Q|,\end{aligned}
\label{int}\end{equation}
using (\ref{weight}).

Recall that in view of (\ref{nmb}), for $t>N$, we have the inclusion
\begin{equation}
\{x\in P+Q:\;n(x)\geq t\}\subseteq \left(\mathcal P_t \equiv \{p\in \mathcal P(\mathcal L):\;m(p)\geq t\}\right).\label{pt}\end{equation}
Hence, we apply the incidence bound (\ref{int}) to the point set $\mathcal P_t$, together
with the lower bound
\begin{equation}\label{trlb} t|\mathcal P_t|\leq i_m(\mathcal P_t,\mathcal L).\end{equation}

It follows that for $t\geq CN$, where the constant $C$ is determined by the constants hidden in the $\ll$ symbol in the estimate (\ref{int}), the second term in the right-hand side of the estimate (\ref{int}), applied to the set $\mathcal P_t$, cannot pos
sibly dominate the estimate. Thus, for $t\geq CN\gg N$, one has
\begin{equation}\label{useprime}
|\mathcal P_t|\ll \frac{|L|^{\frac{3}{2}} |Q|^{\frac{5}{2}}}{t^3} + \frac{|L||Q|}{t}.
\end{equation}
It follows that
\begin{equation}\label{useprime1}
|\mathcal P_t|\ll \frac{|L|^{\frac{3}{2}} |Q|^{\frac{5}{2}}}{t^3}, \qquad \mbox{for} \qquad CN\leq t\leq \sqrt[4]{|L||Q|^3}.
\end{equation}

For larger $t$, one has
to be slightly more careful with the term $\frac{|L||Q|}{t}$ in (\ref{useprime}), which, in fact, can be refined for
\begin{equation}\label{imp}t\geq 2|L|\bar m = 2\sqrt{|Q||L|}\leq 2\sqrt[4]{|L||Q|^3}.\end{equation}

Note that the lines in ${\mathcal L}$ come in $|L|$ possible directions, and
therefore no more than $|L|$ lines can be incident to a single point in
$\mathcal P(\mathcal L)$. Hence, lines from a dyadic set
$\mathcal L_{2^j \bar m}$ cannot contribute but
a small proportion to the total number of weighted incidences supported on the sets $\mathcal P_t$, if $t$ is much greater than  $|L|\cdot (2^j \bar m)$.

More precisely, suppose that $t = |L|\cdot(2^i\bar m),\,i\geq 1$. It follows that for such $t$, the estimate (\ref{alt}) can be restated as to the set $\mathcal P_t$ as follows:
\begin{equation}\frac{1}{2}t |\mathcal P_t| \leq \sum_{j= i}^{\lceil\log_2 N/\bar m\rceil}\, i_m(\mathcal P_t,\mathcal L_{2^j \bar m}).\label{refined}\end{equation}

Indeed, the total contribution of the dyadic sets $\mathcal L_{2^j \bar m}$, to the quantity $m(x)$ for $x\in \mathcal P_t$ and  $j<i$ is at most $2^{i-1}|L|\bar m =\frac{t}{2}.$

We now repeat the argument estimating the right-hand side, which has lead from (\ref{alt}) to (\ref{int}), having in mind that it is only the last term $W$ in the first line of (\ref{int}) that needs to be changed. Namely, $W$ should get replaced by the t
otal weight of the lines, contributing to the right-hand side of (\ref{refined}). These are the lines, whose individual weight is at least $\frac{t}{2|L|}$. Let $W_{\frac{t}{2|L|}}$ denote the total weight supported on these lines.
By (\ref{incubew}) we can estimate $$W_{\frac{t}{2|L|}}\ll \frac{|Q|^2|L|^2}{t^2}.$$

Thus for $t\geq 2|L|\bar m$ the estimate (\ref{useprime}) can be improved as follows:
\begin{equation}
|\mathcal P_t|  \ll
\frac{|L|^{\frac{3}{2}}|Q|^{\frac{5}{2}}}{t^3} +  \frac{ |Q|^2|L|^2}{t^3 }.
\label{tailpups}\end{equation}
Since  $|Q|\geq |P|\geq |L|$, the first term in  (\ref{tailpups}) dominates the estimate, and in view of (\ref{imp}), one has
\begin{equation}\label{useprime2}
|\mathcal P_t|\ll \frac{|L|^{\frac{3}{2}} |Q|^{\frac{5}{2}}}{t^3}, \qquad \mbox{for} \qquad  t\geq \sqrt[4]{|L||Q|^3}.
\end{equation}
The estimates (\ref{useprime1}), (\ref{useprime2}) and the inclusion (\ref{pt}) complete the proof of Lemma \ref{wst}.
\end{proof}

\medskip
All the key ingredients to finish the proof of Theorem \ref{diffest} have been developed. We now use Lemma \ref{wst} to yield an upper bound for the left-hand side in the estimate (\ref{thth}) of Corollary \ref{arco}, applied to the additive set $P$.

Using Lemma \ref{wst} with $Q=- P$, we can bound the quantity $E_3(P)$ as follows:

\begin{equation}\label{e3est}
E_3(P)=\sum_{x\in P-P} n^3(x)\ll  N^2|P|^2 + |L|^{\frac{3}{2}} |P|^{\frac{5}{2}} \sum_{j=0}^{\log |A|} 1.
\end{equation}
Above, the first term deals with the set of all $x\in P-P$, whose number of realisations $n(x)$ is less than the applicability threshold $t=CN$ of Lemma \ref{wst}, with some absolute constant $C$. I.e.,
$$\sum_{x\in P-P:\,n(x)<CN} n^3(x) \ll N^2 \sum_{x\in P-P} n(x) \leq N^2|P|^2.$$

The second term in (\ref{e3est}) results from applying Lemma \ref{wst} to the part of the cubic energy supported on $\{x\in P-P:\,CN\leq  n(x)\leq |P|\} $, using dyadic summation.
Namely for $j\geq 0$, let $X_j= \{x: 2^j CN \leq n(x)< 2^{j+1}CN\leq|P|\}$. Then
$$
\sum_{x\in P-P:\,CN\leq  n(x)\leq |P| } n^3(x) \leq \sum_{j\geq 0}|X_j| \cdot (2^{j+1}CN)^3,
$$
and since $X_j$ is nonempty for $j=O(\log |A|)$ only, the bound (\ref{use}) for $|X_j|$, where one sets $t=2^j CN$, results in the second term in (\ref{e3est}).

In both the ratio and product set cases, by (\ref{L,N_suppos}), $N^2\le 4L^2\le 4\sqrt{|P||L|^3}.$
Thus the second term dominates the estimate (\ref{e3est}), that is

\begin{equation}\label{e3estt}
E_3(P)\ll |L|^{\frac{3}{2}} |P|^{\frac{5}{2}} \log|A|.
\end{equation}

Substituting the estimate (\ref{e3estt}) into (\ref{thth}) yields:
\begin{equation}
E(P, P-P) \gg \frac{|P|^{\frac{11}{2}}}{ |L|^{\frac{3}{2}} |P-P| \log|A|}. \label{lowerbd1}
\end{equation}

Now one can also use Lemma \ref{wst}  with $Q=P- P$ to estimate the quantity $E(P, P- P)$ from above. It follows from (\ref{use}) that for any $t\geq C N$ one has:
\begin{equation}
E(P, P- P) \ll |P||P- P| t + \frac{|L|^{\frac{3}{2}} |P- P|^{\frac{5}{2}}}{t}. \label{upperbd}
\end{equation}
Above, the first term gives a trivial bound for the contribution to $E(P, P- P)$ of all those $x\in P+P-P$ which have fewer than $t$ realisations $n(x)$. The second term uses (\ref{use}) and bounds the contribution to $E(P, P- P)$ of the terms with $t$ or
 more realisations: this contribution is bounded by the dyadic sum
$$
\sum_{j=0}^\infty |\{x\in P+P-P:\,n(x)\geq 2^jt\}| (2^{j+1}t)^2\ll \frac{|L|^{\frac{3}{2}} |P- P|^{\frac{5}{2}}}{t}.
$$
Now we can choose
$$t= C'\frac{|P- P|^{\frac{3}{4}} |L|^{\frac{3}{4}}}{\sqrt{|P|}},$$ where the constant $C'$ is large enough to ensure that $t\geq CN$, the applicability threshold of Lemma \ref{wst}. Such a $C'$ exists, since $|P-P|\geq|P|\geq|L|\geq\frac{N}{2}$.

The above choice of $t$ in (\ref{upperbd}) yields
\begin{equation}
E(P, P- P) \ll \sqrt{|P|}|P- P|^{\frac{7}{4}} |L|^{\frac{3}{4}}. \label{upperbddone}
\end{equation}

Combining this with (\ref{lowerbd1}) results in the following inequality:
\begin{equation}\label{ngood1}
|P-P|^{\frac{11}{4}} |L|^{\frac{9}{4}} \gg \frac{|P|^5}{\log |A|}.
\end{equation}

\medskip
It remains to eliminate $|L|$ from the latter estimate, relative to the ratio or the product set case.

To obtain the first estimate of (\ref{resdif}) as to the ratio set case, it suffices to note that $|P|\geq \frac{1}{2} |A|^2$, $|L|\leq |A:A|, $
as well as $|P-P| \leq |A-A|^2$.

In the product set case, where  $|P|\approx |L|N$, the estimate (\ref{ngood1}) becomes

\begin{equation}\label{ngood3}
|A-A|^{\frac{11}{2}}  \gg \frac{(|L|N^2)^{\frac{11}{4}}}{\sqrt{N} \log |A|}.
\end{equation}
The quantity $LN^2$ is bounded from below by (\ref{min}), and Lemma \ref{ll} provides a non-trivial upper bound (\ref{nbd}) for $N$.
Substituting these bounds into (\ref{ngood3}) yields the second estimate of (\ref{resdif}) and completes the proof of Theorem \ref{diffest}. \qed

\subsection{Proof of Lemma \ref{ll}}

A variant of Lemma \ref{ll} can be found in the recent papers  \cite{LR}, \cite{SS1} and represents a slight generalisation of the well known approach to the sum-product problem
due to Elekes \cite{E}.
For completeness sake, we further present a simple proof.
The notation in the forthcoming argument is somewhat independent from the rest of the paper.

Consider a set $A$, not containing zero and a set of lines
$\mathcal L = \{y = \frac{d + x}{a}\}$, where $d$ is an element of
the difference set $A-A$ and $a\in A$. Clearly there are $|A-A||A|$ lines.
Therefore, the number of points in a set $\mathcal P_t$, where more than $t$ lines from $\mathcal L$ intersect is, by
 (\ref{work}), bounded as follows:
 \begin{equation}\label{ptbd}
 |\mathcal P_t|\ll \frac{|A-A|^2|A|^2}{t^3} + \frac{|A-A||A|}{t}.
 \end{equation}

Suppose now that
$$L_{t}=\{l\in A:A, \, n(l)> t\}.$$
For each
$l\in L_{t}$, one has $l=\frac{a'_i}{a_i}$, where the index $i$ runs over $n(l)$
distinct values. Given $l\in L_{t}$, for
every  $a\in A$, one has $l=\frac{(a'_i - a) +a }{a_i}$, for $i=1,\ldots,n(l)$. I.e., the point in the plane with coordinates $(a,l)$ is incident to at least $n(l)$ lines from  $\mathcal L$, these lines being identified by the pairs $(d_i = a_i'-a, a_i)
$, with $i=1,\ldots,n(l)$.

 Hence $A\times L_{t} \subseteq \mathcal P_{t}$, and
 it follows from (\ref{ptbd}) that
\begin{equation}\label{livar}
 |L_{t}|\ll \frac{|A-A|^2|A|}{t^3}+\frac{|A-A|}{t}.
 \end{equation}

 Let us use (\ref{livar}) to estimate the contribution of the set $L_t\subseteq A:A$ to the multiplicative energy $E_*(A).$
 For $j=0,1,\ldots$, with the upper bound $2^{j+1}t\leq|A|$, the set of ratios $\{l\in A:A, \, 2^{j}t< n(l)\leq 2^{j+1}t\leq|A|\}$ contributes to $E_*(A)$ at most
 $$
 4 |L_{2^{j}t}| (2^{j}t)^2 \ll \frac{|A-A|^2|A|}{2^{j}t}+ |A-A|(2^{j}t).
 $$
Summing the right-hand side over $j$ yields a bound for the contribution of the set $L_t$ to the multiplicative energy $E_*(A)$, as follows:
$$
\sum_{l\in A:A, \,t<n(l)\leq|A|} n^2(l)\ll \frac{|A-A|^2|A|}{t} + |A-A||A| \ll \frac{|A-A|^2|A|}{t}.
$$
Comparing this with the lower bound (\ref{emult}) for $E_*(A)$ shows that for some $C$, one can set
\begin{equation}
\label{tbnd}
t = C\frac{|A\cdot A||A-A|^2}{|A|^3},\end{equation}
and have the following inequality

$$
\sum_{l\in A:A, \,n(l)\leq t} n^2(l) \geq\frac{1}{2} \frac{|A|^4}{|A\cdot A|}.$$

Thus, there exists a dyadic subset of $\{l\in A:A, \,n(l)\leq t\}$, namely
the set $L=\{l\in A:A, \,\frac{N}{2}<n(l)\leq N\},$ for some $N\leq t$, such that this set $L$ contributes to the multiplicative energy $E_*(A)$ at least the amount $\frac{1}{2\log_2|A|} \frac{|A|^4}{|A\cdot A|}.$
Since $t$ satisfies (\ref{tbnd}), this proves Lemma \ref{ll}.
 \qed

\begin{remark} The argument in the above proof of Lemma \ref{ll} is symmetric with respect to the
two field operations in $\C$:
by defining the set of lines as $\mathcal L = \{y = lx -a\},$ where
$(l,a)\in (A:A)\times A$ one can get a similar upper bound on the maximum number of realisations of popular differences (or sums, by a trivial modification), contributing to the additive energy $E(A)$, via the ratio or product set.  \end{remark}

\section{Acknowledgement} The first author was partially supported
by Russian Fund for Basic Research, Grant N.~11-01-00329,
and Program Supporting Leading Scientific Schools, Grant Nsh-6003.2012.1.
The second author thanks Oliver Roche-Newton for helpful discussions and remarks.

\newpage


\begin{thebibliography}{4}
\bibitem{BJ} T. Bloom, T. Jones. {\em A sum-product theorem in function fields.} Preprint {\sf arXiv math:1211.5493} (2012), 17pp.

\bibitem{E} G. Elekes. {\em On the number of sums and products}. Acta Arithmetics {\bf 81} (1997), 365--367.

\bibitem{ENR} G. Elekes, M. Nathanson, I. Z. Ruzsa. {\em  Convexity and sumsets.} J. Number Theory {\bf 83} (2000), 194--201.

\bibitem{ES} P. Erd\H os, E. Szemer\'edi. {\em On sums and products of integers.} Studies in Pure Math. (Birkh\"auser, Basel, 1983) 213--218.

\bibitem{G} M.Z. Garaev. {\em On an additive representation associated with the $L^1$-norm of an exponential sum.} Rocky Mountain J. Math. {\bf 37} (2007), no. 5, 1551--1556.

\bibitem{IKRT} A. Iosevich, S. Konyagin, M.  Rudnev, V Ten. {\em Combinatorial complexity of convex sequences.} Discrete Comput. Geom. {\bf 35} (2006), no. 1, 143--158.

\bibitem{JRN} T. Jones, O. Roche-Newton. {\em Improved bounds on the set $A(A+1).$} Preprint {\sf arXiv math:1205.3937} (2012), 15pp. To appear in J. Combin. Th. A.


\bibitem{KK} N.H. Katz, P. Koester. {\em On additive doubling and energy.} SIAM J. Discrete Math., 24(4) (2010), 1684--1693.

\bibitem{Li} L. Li.  {\em On a theorem of Schoen and Shkredov on sumsets of convex sets.} Preprint {\sf arXiv math:11108.4382} (2011), 6pp.

\bibitem{LR} L. Li, O. Roche-Newton. {\em Convexity and a sum-product type estimate.} Preprint {\sf arXiv: math: 1111.5159} (2011), 10pp.

\bibitem{MR} M. Rudnev. {\em An Improved Sum-Product Inequality in Fields of Prime Order.} Int. Math. Res. Notices (2012) (16): 3693--3705.

\bibitem{SS0} T. Schoen, I. Shkredov. {\em  Additive properties of multiplicative subgroups of $\F_p$.}  Quart. J. Math. {\bf 63} (2012), 713--722

\bibitem{SS} T. Schoen, I. Shkredov. {\em On sumsets of convex sets.} Comb. Probab. Comput. {\bf 20}
(2011), 793--798.

\bibitem{SS1} T. Schoen, I. Shkredov.  {\em Higher moments of convolutions.}  Preprint {\sf arXiv math:1110.2986} (2011), 36pp.

\bibitem{SV} I. Shkredov, I. Vyugin. {\em  On additive shifts of multiplicative subgroups.} Sb. Math., 2012, {\bf 203} (6), 844–-863.

\bibitem{So} J. Solymosi. {\em On the number of sums and products} Bull. London Math. Soc. {\bf 37} (2005), no. 4, 491--494.

 \bibitem{So1} J. Solymosi.   {\em Bounding multiplicative energy by the sumset.} Adv. Math. {\bf 222} (2009), no. 2, 402--408.

 \bibitem{SoT} J. Solymosi, T. Tao. {\em An incidence theorem in higher dimensions.} Preprint {\sf arXiv math: 1103.2926} (2011), 24pp.

\bibitem{ST} E. Szemer\'edi, W.T. Trotter, Jr. {\em Extremal problems in discrete geometry.} Combinatorica {\bf 3} (1983), 381--392.

\bibitem{TT} T. Tao. {\em The sum-product phenomenon in arbitrary rings.} Cont. to Disc.
Math. {\bf 4} (2009), 59--82.

 \bibitem{T}  C. D. T\'oth. {\em   The Szemer\'edi-Trotter Theorem in the Complex Plane.}
 Preprint {\sf arXiv math/0305283} (2003), 23pp.

 \bibitem{Z} J. Zahl. {\em A Szemer\'edi-Trotter type theorem in $\R^4$.} Preprint {\sf arXiv:math/1203.4600} (2012), 47pp.

\end{thebibliography}
\end{document}